\newtheorem{definition}{Definition}[section]
\newtheorem{proposition}[definition]{Proposition}
\newtheorem{theorem}[definition]{Theorem}
\newtheorem{example}[definition]{Example}
\newcommand{\Z}{\mathbb{Z}}
\newcommand{\D}{\mathcal{D}}
\newcommand{\E}{\mathcal{E}}
\newcommand{\A}{\mathsf{A}}
\newcommand{\B}{\mathsf{B}}
\newcommand{\LL}{\mathsf{L}}
\newcommand{\TT}{\mathsf{T}}
\newcommand{\MM}{\mathsf{M}}
\newcommand{\I}{\mathsf{I}}
\newcommand{\NN}{\mathsf{N}}
\newcommand{\JJ}{\mathsf{J}}
\newcommand{\ZZ}{\mathsf{Z}}
\newcommand{\im}{{\rm im}\,}
\newcommand{\di}{{\rm dim}}
\begin{document}

\title{The algebraic structure of quantity calculus}
\author{\'Alvaro P. Raposo\thanks{e-mail:\texttt{alvaro.p.raposo@upm.es}} \\ Department of Applied Mathematics \\ Universidad Polit\'ecnica de Madrid\\ Av. Juan de Herrera, 6, Madrid, 28040, Spain}
\date{June 29, 2016}
\maketitle

\begin{abstract}
An algebraic structure underlying the quantity calculus is proposed consisting in an algebraic fiber bundle, that is, a base structure which is a free Abelian group together with fibers which are one dimensional vector spaces, all of them bound by algebraic restrictions.  Subspaces, tensor product and quotient spaces are considered as well as homomorphisms to end with a classification theorem of these structures.  The new structure provides an axiomatic foundation for quantity calculus and gives complete justification within its framework of the way that quantity calculus is actually performed.  It is hoped that this exposition helps to clarify the role of the interviening concepts of quantity, quantity value, quantity dimension and their relation with a system of units, particularly, the SI.
\end{abstract}

\noindent{\it PACS:} 02.10.De, 06.20.F

\noindent{\it AMS subject classification:} Primary 08A02; Secondary 70A05

\noindent{\it Keywords:} Quantity calculus, algebraic structure, group of dimensions.

%%%%%%%%%%%%%%%%%%%%%%%%%% SECTION: INTRODUCTION %%%%%%%%%%%%%%%%%%%%%%%%%%%%%%%
\section{Introduction}\label{sec:introduction}
Quantity calculus is the algebra of the operations performed among quantities, which are three: product of quantities, product of a number times a quantity and addition of quantities, with the particularity in the latter case that only quantities of the same kind may be added. There have been, and still are, intense debates around the few concepts involved in the previous sentence: quantity, quantity calculus, quantities of the same kind, and also quantity dimension and quantities of dimension one (formerly referred to as dimensionless quantities).  All of these terms are defined in the International Vocabulary of Metrology (VIM) \cite{VIM3}, but the definitions are not always as clear as desirable and this may be caused by the lack of a formalism supporting quantity calculus.

Actually, the rules of quantity calculus are well handled by scientist and technicians and the mere existence of the International System of Quantities (ISQ) and the International System of Units (SI) is a demonstration of the high degree of sofistication attained in its use.  Despite of it, this algebra has not yet been described from a simple set of axioms from which the rest of properties are implied, as was acknowledged by de Boer in his classical paper of 1994 \cite{Boe94}.  Let us examine why.

One of the starting points of quantity calculus is located in Maxwell's book on electricity and magnetism \cite{Max73} where he refers to quantities as being expressed by the product of a number and a unit, the latter being a standard quantity of reference of the same kind as the quantity considered.  This is usually written for the quantity $q$ as
\begin{equation}\label{eq:Maxwell}
q = \{q\} \cdot [q],
\end{equation}
where $[q]$ stands for the unit and $\{q\}$ is the number of times $q$ comprises the unit.  In fact, this splitting of a quantity in a numerical value and a reference is the definition of the concept quantity in the VIM \cite{VIM3}, but notice that such a definition is void, for it defines a quantity $q$ as a property that can be expressed as a number times another quantity, $[q]$.  Equation (\ref{eq:Maxwell}) is only a way in which a quantity may be expressed, but not a definition of the concept.

It is de Boer in his paper of 1994 \cite{Boe94} about the history of quantity calculus who points that  equation (\ref{eq:Maxwell}) has been taken as the definition of the notion of quantity due to a lack of a formalism.  At the end of his paper, de Boer describes the state of the art in the problem of the formalization of the algebraic structure of quantity calculus in the terms which are summarized here:
\begin{itemize}
\item Multiplicative properties: The set of quantities has a product operation with the properties of associativity, commutativity, identity and inverse for nonzero quantities.  This gives the set the algebraic structure of a monoid.
\item Quantities of the same kind: The set of quantities is partitioned into classes of quantities of the same kind. Only inside each class addition is allowed and the product of a number times a quantity result in a quantity in the same class.  These two operations satisfy the axioms of a vector space, so each class is a vector space.  Moreover, each of them is of (vector space) dimension one, for any quantity can be expressed as a number times a nonzero quantity selected as a standard for that class, i.e., the unit in this class.
\item Several isomorphic groups:  There are several group structures recognized in this framework, all of them isomorphic.  First, the equivalence classes can be multiplied considering that the product of two quantities of any kind gives a quantity of another kind.  This product of classes result in a group structure for the set of equivalence classes.  A system of units is a choice of a unit, that is, a nonzero quantity, in each equivalence class; the system is coherent if the product of units result in another unit of the system. If that is the case, the set of units is also a group under the product, and it is isomorphic with the group of equivalence classes.  Additionally there is the concept of dimension, which is defined as the quality shared by quantities of the same kind.  Then there is an obvious product of dimensions and it gives the set a group structure, isomorphic with the other two groups mentioned.
\end{itemize}
De Boer also points that these elements are lacking of a formalization which gathers all of them together in a single algebraic structure defined by a simple list of axioms.

In the review article by Foster \cite{Fos10} we find such a list of four axioms for the ISQ:
\begin{enumerate}
\item The dimension of a quantity is the product of the powers of factors corresponding to the base quantities of the system.
\item A quantity $q$ is the product of a numerical value $\{q\}$ and a unit $[q]$.
\item The product of two quantities is the products of their numerical values and units
\[
q_1q_2 = \{q_1\}\{q_2\} \, [q_1][q_2],
\]
where $\{q_1\}\{q_2\}=\{q_1q_2\}$ and $[q_1][q_2] = [q_1q_2]$.
\item The quotient of two quantities is the quotients of their numerical values and units
\[
q_1/q_2 = \{q_1\}/\{q_2\} \, [q_1]/[q_2],
\]
where $\{q_1\}/\{q_2\}=\{q_1/q_2\}$ and $[q_1]/[q_2] = [q_1/q_2]$.
\end{enumerate}
Notwithstanding their correctness, these four statements are not a set of axioms.  The first one lacks of previous definitions of the realm in which the concepts of quantity and dimensions belong.  The second one looks more a definition than an axiom and, as commented before, it is actually a void definition.  The third and four show the same flaws as the second: they look as definitions and they rely the definition of an operation of quantities on the same operation performed on another quantities (the units).  In addition, in every algebraic system with a product operation and an identity element, the quotient is not a different operation, but the product with inverses and, thus, the problem of taking quotients is translated to the problem of the existence of inverses.  Finally, this set of axioms does not mention the addition operation.  Therefore, this attemp does not answer de Boer's question.

There exists a succesful attempt for such a formalism proposed by Drobot \cite{Dro53} as early as 1953, and improved by Whitney \cite{Whi68b}.  Their goal was to give a sound foundation to dimensional analysis, in particular the celebrated Buckingham's Pi Theorem.  The key to this formalism is that, in a coherent system of units, if $q_1, \dots, q_k$ are units, so is $q_1^{\alpha_1} \cdots q_k^{\alpha_k}$.  Now, if the exponents $\alpha_1, \dots, \alpha_k$ are taken in a field, say the rational numbers or the real numbers, then the set of units shows a structure of a vector space written multiplicatively.  Of course, carrying such a structure is a great advantage, for all the tools of linear algebra become available.  However, when addition of quantities and product with real numbers are introduced, the structure gets more complex.  In fact, Carlson \cite{Car79} and Kock \cite{Koc89} explore further this algebraic structure, but restricting themselves to the product operation, so they deal  only with the linear properties.

While Drobot-Whitney's formalism gives a rigorous basis to dimensional analysis, as it is their author's intention, the structure they depict is not quite the algebraic structure that de Boer was pleading for a system of quantities.  The use of exponents $\alpha_1$, \dots, $\alpha_k$ from a field  seems unnecessary.  It is not justified in the properties of actual quantities, for it is a remarkable fact that dimensionful quantities (in contrast with dimensionless quantities) appear in laws of physics with the only operations of product, product by a number and addition between quantities of the same kind.  There is no need for exponentiation with fractional, less real, exponents.  Only quantities of dimension one, pure numbers, appear with noninteger exponents or as arguments of functions such as trigonometric, exponential or logarithmic.  Whitney justifies the use of rational or real exponents on the example of a free falling body from height $h$, where the time $t$ needed to reach the ground is given by $t=\sqrt{2gh}$, with $g$ the acceleration of gravity so, he concludes, to compute the quantity $t$, the square root of the quantity $2gh$, which is dimensionful, must be considered.  It is clear, however, that the square root is taken on a quantity whose dimension is time squared, and that this is always the case with fractional exponents so, if fact, it is artificial to allow a generalized entrance to these exponents.  Another example  which comes to mind is the equation of the curve representing a reversible adiabatic process of an ideal gas in the $p-v$ plane (pressure and volume), which is usually written as $pv^\gamma={\rm constant}$, where $\gamma$ is the adiabatic index of the gas, a noninteger number, in general.  The derivation of this equation shows that the original form of it is $(p/p_0)(v/v_0)^\gamma=1$, where $p_0$ and $v_0$ are the pressure and volume at some point of reference in the curve, so the quantities involved with the noninteger exponent are of dimension one.

A more recent effort in the direction pointed by de Boer is the paper by Krystek \cite{Kry15} in which he describes with full detail the set of dimensions and the group structure it carries, as initiated in the third point above summarizing de Boer's paper, but now with a powerful mathematical approach to describe those isomorphic groups.  His purpose is to acknowledge that the quantities of dimension one are, in fact, those whose dimension is the identity element in this group and, hence, proposes a symbol for this identity dimension, $\ZZ$. In the present paper, the ideas advanced by Krystek are pursued on and expanded.

The goal of this paper is to provide a new abstract algebraic structure, defined on a simple set of axioms, which accout exactly for the properties of quantity calculus.  This algebraic structure is the one underlying any system of quantities, in particular the ISQ, and, thus, it can help to clarify some of the concepts still under debate within the definitions in the VIM or some of the concepts underlying the SI.

A complete description, in mathematical terms, of such an algebraic structure must permit us, first, define with precision and locate appropriately each of the concepts involved, for instance quantity value, numerical value of a quantity, dimension, dimension one, and so on.  Second, justify the expression of a quantity as a number times a reference, that is, equation (\ref{eq:Maxwell}), which no longer should be taken as a definition; moreover, the symbols $\{\cdot\}$ and $[\cdot]$ must be studied and their algebraic properties described.  Third, justify the actual way in which operations among quantities are performed, that is, by means of their expressions like in equation (\ref{eq:Maxwell}) and operating with the numerical values and the units separately.  Fourth, classify the possible systems of quantities from the viewpoint of its algebraic structure, in particular, confirm the classification of the ISQ within this scheme.

Although much advantage is taken from the work of Krystek \cite{Kry15}, a key difference is worth noticing between his approach and the present one.  In Krystek's paper the starting point is a system of quantities from which the group of dimensions is built by means of a quotient.  In the present paper, however, the starting point is the group of dimensions, which is defined in the first place, and which upon it the system of quantities is built.  Despite of it the quotient relations shown by Krystek are also valid in the present scheme, for the latter includes completely the former.

The layout is as follows.  In section~\ref{sec:space} the set of dimensions and its group structure is defined in advance and, on top of it, the definition of space of quantities is built, followed by the consideration of systems of units and the justification of the usual properties of quantity calculus, in particular, the symbols $\{\cdot\}$ and $[\cdot]$ are shown to be maps with suitable algebraic properties. In section~\ref{sec:new-spaces} we define subspaces of a given space, product of spaces and quotient spaces, as tools for building new spaces from old ones, in particular it is shown the latter to be the technique for defining the so called natural units.  In section~\ref{sec:homomorphism} homomorphisms of spaces of quantities are defined and, with them at hand, we are able to compare spaces of quantities, define isomorphic spaces and classify them, giving a complete characterization up to isomorphism.

%Of course this is valid within the interpretation of a system of quantities from the systematists point of view (as introduced in \cite{Boe94} and discussed throughfully \cite{Eme08,Fos10}).

%%%%%%%%%%%%%%%%%%%%%%%%%% SECTION: SPACE OF QUANTITIES AND GROUP OF DIMENSIONS %%%%%%%%%%%%%%%%%%%%%%%%%%%%%%%
\section{Group of dimensions and space of quantities}\label{sec:space}

Our object of study is a set $Q$ of quantities and the operations defined within it, that is, a system of quantities as defined by the VIM \cite{VIM3} but, in order to distinguish the abstract algebraic structure considered here from the particular instance considered in the VIM, the term space of quantities will be used here instead of system of quantities.  The elements of $Q$ will be denoted by lowercase latin letters, particularly $q$, $r$, $s$. These quantities, as it is detailed below, can be multiplied and added among them and also multiplied by scalar numbers from a field $F$, whose elements will be denoted by lowercase greek letters, particularly $\alpha$, $\beta$.
\begin{example}\label{ex:sets-of-quantities}
For further reference, we assign symbols to the following spaces of quantities: $Q_{\rm geom}$, the space of quantities of geometry, that is, all the quantities needed to deal with lengths, areas, volumes, angles, etc. $Q_{\rm time}$, the space of quantities to measure time. $Q_{\rm kin}$, the space of quantities of kinematics. $Q_{\rm mech}$, the space of quantities of mechanics. $Q_{\rm phys}$, the space of quantities of physics. $Q_{\rm ISQ}$, the International System of Quantities.
\end{example}
It is worth noticing that in the algebraic structure we are about to define it is meaningless to distinguish the concepts quantity and quantity value, as it is explicitly done in VIM \cite{VIM3} and oftenly discussed \cite{Mar12}.  In fact, within the algebraic structure, the symbol $q$ stands for a quantity as well as the symbol $[q]$ and, thus when we write $q= \{q\} \, [q]$ we identify in the structure both sides of the equality, while in the VIM the left hand side is viewed as the quantity itself and the right hand side as its value.

%%%%%%%%%%%%%%%%% SUBSECTION: GROUP OF DIMENSIONS %%%%%%%%%
\subsection{Group of dimensions}\label{subsec:dimensions}
As noticed in the introduction, and in contrast with the opinion of some authors \cite{Eme05}, a main role in the structure is played by the dimension of a quantity.  The importance of dimensions lies in that the dimension of a quantity is an intrinsic property of it, in contrast with its numerical value, which depends on the unit chosen, or the unit itself, which can be changed arbitrarily.  Therefore each quantity must have a firm link with its dimension in the present scheme better than a link with a unit or its numerical value with respect to that unit, despite the latter is what equation (\ref{eq:Maxwell}) suggests.  To that end, let us first define properly the set of dimensions. The properties which characterize this set have been well described in the paper by Krystek \cite{Kry15}, and they are just summarized here: dimensions can be multiplied and show the structure of an Abelian group with two further properties which characterize this group.  First, no element is torsion, for there is no dimensionful quantity which multiplied by itself finitely many times becomes a quantity of dimension one.  Second, it is finitely generated. Therefore we adopt the following definition in which the set of dimensions is referred to at once as the group of dimensions.
\begin{definition}\label{def:group-of-dimensions}
A group of dimensions is a finitely generated free Abelian group.
\end{definition}
In this paper such a group is generally denoted by $\D$ and its elements by uppercase letters in roman sans-serif type such as $\A, \B\dots$ (as stated in the VIM).

The identity element of the group deserves a bit of attention.  It has been recognized \cite{Boe94,Kry15} that it stands for the dimension of the quantities of dimension one.  It is also under debate the name for the quantities with this dimension: dimensionless is being abandoned (righteously, as these quantities do have a dimension) but dimension one, the name now proposed in the VIM, is not well settled \cite{Bro15,Kry15,Mil16}.  The common use in the mathematical theory of groups is to denote the identity element of a group with the symbol 1, which is well understood not to be the number one, but an element of the group.  In this paper the symbol $1_{\D}$ is used, which follows the group theoretical tradition but reminds the reader that the symbol belongs to $\D$ and, thus, it is not a number.

Two properties of finitely generated free Abelian groups are of interest for us \cite{Rot95}. In the first place, there exists the concept of basis: an independent (finite) set of generators.  If $\{\A_1, \dots, \A_k\}$ is such a basis for a group $\D$ then any element $\B$ has a unique expression in terms of it of the form $\B=\A_1^{n_1} \cdots \A_k^{n_k}$, where the exponents $n_1$, \dots, $n_k$ are integer numbers.  The number $k$ of generators of any basis is called the rank of the group, and is a characteristic property of a free Abelian group.  In the second place, such a group is isomorphic with the direct product of $k$ infinite cyclic groups: $\D \cong \langle \A_1 \rangle \times \cdots \times \langle \A_k \rangle$, where $\langle \A_i \rangle = \{\A_i^n\,:\, n \in \Z\}$.
\begin{example}\label{ex:groups-of-dimensions}
The groups of dimensions of the systems of quantities given in example \ref{ex:sets-of-quantities} are, respectively, the following:  $\D_{\rm geom} = \langle \LL \rangle$, the free Abelian group generated by $\LL$, which denotes length. $\D_{\rm time} = \langle \TT \rangle$, generated by $\TT$ (time). $\D_{\rm kin} = \langle \LL, \TT \rangle$, generated by $\LL$ and $\TT$. $\D_{\rm mech} = \langle \LL, \TT, \MM \rangle$, generated by $\LL$, $\TT$ and $\MM$ (mass). $\D_{\rm phys} = \langle \LL, \TT, \MM, \I, \Theta \rangle$, generated by $\LL$, $\TT$, $\MM$, $\I$ (electric current) and $\Theta$ (thermodynamic temperature). $\D_{\rm ISQ} = \langle \LL, \TT, \MM, \I, \Theta, \NN, \JJ \rangle$, generated by $\LL$, $\TT$, $\MM$, $\I$, $\Theta$, $\NN$ (amount of substance) and $\JJ$ (luminous intensity). Of course the dimensions $\LL, \TT, \MM, \I, \Theta, \NN, \JJ$ are considered independent, that is, no one can be obtained as a product of powers of the remaining six.
\end{example}

%%%%%%%%%%%%%%%%% SUBSECTION: SPACE OF QUANTITIES %%%%%%%%%
\subsection{Space of quantities}\label{subsec:space}
The link between a quantity and its dimension is made by means of a map $\di\!\!:Q\to\D$, that is, the dimension of the quantity $q$ is $\di(q)$. This map is obviuosly a surjection. Krystek \cite{Kry15} has denoted this map as $\delta$ in order to distinguish it from the map $\di$ as defined by the VIM.  The difference, as he himself explains, is that $\di$ applies to quantities, while $\delta$ to quantity values.  Since the algebraic structure we are developing does not distinguish between quantity and quantity value, both maps are identified as well in this paper.
\begin{example}\label{ex:dimensions}
If $h$ is Planck's constant then $\di(h)=\LL^2\TT^{-1}\MM$; and $\theta$, the angle at a vertex of a triangle, yields $ \di(\theta)=1_{\D}$; each one in the appropriate setting.
\end{example}

 In order to reflect that the dimension of a product of quantities is the product of the dimensions of the quantities the projection map must be a homomorphism with respect to the product of quantities.

All the quantities with the same dimension, say $\A$, form a set called a fiber, for it can be written as the inverse image of that dimension: $\di^{-1}(\A)$. As the VIM explicitly states, quantities of the same kind belong to the same fiber, while the opposite is not necessarily true.  In each fiber quantities can be added and multiplied by scalars in a field $F$, resulting in quantities of the same dimension.  These operations give the fiber the structure of a vector space (written additively, as usual) over the field $F$.  Moreover, since the comparison of each quantity in the fiber with a reference in the fiber, the unit, yields a single number, as in equation (\ref{eq:Maxwell}), that vector space is one dimensional (the latter in the sense of vector space dimension over $F$).  The field $F$ is usually assumed to be that of the real numbers but so far there is no algebraic reason to restrict the definition to it.  We are now ready to give a formal definition of a space of quantities which takes into account all the elements aforementioned.
\begin{definition}\label{def:space-of-quantities}
A space of quantities with group of dimensions $\D$ over the field $F$ is a set $Q$, together with a surjective map $\di\!\!:Q\to \mathcal{D}$ such that:
\begin{enumerate}
\item[(i)] for each $\A \in \D$, the fiber $\di^{-1}(\A)$ has the structure of a one dimensional vector space over $F$,
\item[(ii)] there is a product defined in $Q$ which makes it into an Abelian monoid and the map $\di$ is a monoid homomorphism, that is, for $q, r \in Q$,
\[
\di(qr)=\di(q)\, \di(r),
\]
and
\item[(iii)] the product distributes over the addition in each fiber, that is, for $q, r_1, r_2 \in Q$ with $\di(r_1)=\di(r_2)$,
\[
q (r_1 + r_2) = qr_1 + qr_2,
\]
and the product associates with the product by scalars in the sense of
\[
\alpha (qr) = (\alpha q) r,
\]
where $q,r \in Q$ and $\alpha \in F$.
\end{enumerate}
The rank of $Q$, ${\rm rank}(Q)$, is the rank of its group of dimensions.
\end{definition}

This structure can be thought of as an algebraic fiber bundle, where the base structure is the group $\D$ and where over each element of it we place a fiber which is a one dimensional vector space.  The fibers are not independent, for they have algebraic bounds given by the condition of the projection map being a monoid homomorphism.  All fibers are isomorphic as vector spaces, and isomorphic to the field $F$, but there is one fiber of particular interest: the fiber $\di^{-1}(1_{\D})$, the set of quantities of dimension one.  The identity element in $Q$ is denoted $1_Q$ and, since $\di$ is a homomorphism, necessarily $\di(1_Q)=1_{\D}$, so $1_Q$ is a quantity of dimension one, as expected.  Therefore, there is a natural isomorphism between the field $F$ and the fiber of quantities of dimension one, that assigning the number 1 in $F$ with $1_Q$.  For this reason, this fiber can be identified with $F$ when needed.

%%%%%%%%%%%%%%%%% SUBSECTION: SYSTEM OF UNITS %%%%%%%%%
\subsection{System of units}\label{subsec:units}
We now turn to the task of defining system of units.  It has been noticed that a system of units is nothing but a choice of a nonzero quantity of each dimension, that is, a basis in each fiber. Remember that a system of units is called coherent if the product of the units of any two quantities $q$ and $r$ gives the unit in the system for the quantity $qr$.  The tool for a precise definition is the concept of section, which is a map which chooses one, and only one, element in $Q$ from each fiber.
\begin{definition}\label{def:section}
A section of the space of quantities $Q$ is a map $\sigma:\D\to\ Q$ such that $\di \circ \sigma = {\rm id}_\D$.  A section is called coherent if the map is a group homomorphism.  The zero section, denoted $\sigma_0$, is the section which selects the zero element of each fiber.  A nonzero section is a section none of which images is a zero element.
\end{definition}
Then we have the following definition.
\begin{definition}\label{def:system-units}
A system of units in a space of quantities is a nonzero section of it.  The system is called coherent if the section is coherent.
\end{definition}

Before proceeding further a word on the zeros of $Q$ is worth mentioning. Since each fiber has a zero element there are many zeros in the space $Q$, all of which constitute $\sigma_0(\D)$, the image of the zero section.  In this construction each zero has a dimension, so $0 \,{\rm m}{\rm s}^{-1}$ is a different quantity than $0 \,{\rm kg}$.
%\begin{figure}[htb]
%\begin{center}
%\includegraphics[width=\textwidth]{rays-fibers.png}
%\caption{An intuitive illustration of the structure of birays by Drobot and Whitney, with a unique zero quantity, and the structure of fiber bundle, with a zero quantity in each fiber, constituting the zero section.}\label{fig:rays-fibers}
%\end{center}
%\end{figure}

Therefore, rather than speaking of the zero element, in this structure we have to speak of a zero element to refer ourselves to any of these elements in the image of the zero section.  Despite of it, when no confusion is possible we write $q=0$ to symbolize that the quantity $q$ is a zero, without stating explicitly its dimension.  Nevertheless, these zeros behave as is expected from an ordinary zero: the product of a quantity with a zero is a zero, as can be easily verified.  However, it must be noticed that there is nothing in the definition of a space of quantities  to prevent the existence of zero divisors, i.e. nonzero quantities $q$ and $r$ such that their product $qr$ is a zero.  As an extreme example consider a space of quantities with a product defined as $qr=0$ for any dimensionful quantities $q$ and $r$; it satisfies all the axioms of definition~\ref{def:space-of-quantities}.  Zero divisors, if any, are by no means isolated for, if $q$ is a zero divisor, then $\alpha q$, with $\alpha \in F$ is also a zero divisor, so the entire fiber of $q$ is made of zero divisors.  Also if $s$ is another quantity such that $sq$ is not zero, then $sq$ is another zero divisor.  Of course zero divisors do not show up in spaces of quantities of actual measurements, therefore in what follows we only consider spaces of quantities free of zero divisors.  Some advantages we gain from that are collected in next proposition.
\begin{proposition}\label{prop:no-zero-divisors}
In a space of quantities the following properties are equivalent:
\begin{enumerate}
\item There are no zero divisors.
\item The cancellation law holds for the product.
\item Every nonzero quantity is invertible.
\item There exists a coherent system of units.
\end{enumerate}
\end{proposition}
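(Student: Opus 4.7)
The plan is to prove the four conditions equivalent via the short chain (i)$\Leftrightarrow$(ii), (i)$\Leftrightarrow$(iii), (iii)$\Leftrightarrow$(iv). I would first record the preliminary fact that $q\cdot 0=0$ for every $q\in Q$ and every zero element, which follows from axiom (iii): $q\cdot 0=q(0+0)=q\cdot 0+q\cdot 0$ forces $q\cdot 0$ to be the additive identity of its own fiber. From (i) to (ii), if $qr=qs$ with $q\neq 0$, applying $\di$ and cancelling in the group $\D$ gives $\di(r)=\di(s)$, so $r$ and $s$ sit in a common fiber and $q(r-s)=0$ by distributivity, whence $r=s$ by (i). Conversely, if cancellation holds and $qr$ is a zero, I would rewrite $qr=q\cdot 0_{\di(r)}$ using the preliminary and cancel to get $r=0$.

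For (i)$\Leftrightarrow$(iii), given nonzero $q$ of dimension $\A$, I would consider the left-multiplication map $L_q\colon\di^{-1}(\A^{-1})\to\di^{-1}(1_\D)$, $L_q(r)=qr$, which is $F$-linear by axiom (iii) and injective by (i); since both source and target are one-dimensional over $F$, it is bijective, so some $r$ satisfies $qr=1_Q$. The reverse is immediate: if $qr$ is a zero with $q\neq 0$, multiply by $q^{-1}$ and invoke the preliminary to conclude $r=0$.

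For (iii)$\Leftrightarrow$(iv), I would fix a basis $\A_1,\dots,\A_k$ of $\D$ and choose a nonzero $u_i\in\di^{-1}(\A_i)$ for each $i$, each invertible by (iii). The assignment $\sigma(\A_1^{n_1}\cdots\A_k^{n_k})=u_1^{n_1}\cdots u_k^{n_k}$ is then well defined (freeness of $\D$), is a group homomorphism (since $Q$ is Abelian), satisfies $\di\circ\sigma=\mathrm{id}_\D$, and takes only invertible, hence nonzero, values, so it is a coherent system of units. Conversely, given a coherent $\sigma$, any nonzero $q$ equals $\alpha\,\sigma(\di(q))$ for a unique $\alpha\in F^\times$ because the one-dimensional fiber is spanned by $\sigma(\di(q))$, and then $\alpha^{-1}\sigma(\di(q)^{-1})$ inverts $q$ via $\sigma(\di(q))\sigma(\di(q)^{-1})=\sigma(1_\D)=1_Q$ together with the scalar identity $(\alpha x)(\beta y)=\alpha\beta(xy)$ derived from axiom (iii) and commutativity.

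I expect the only substantive step to be (i)$\Rightarrow$(iii), where the rank-one structure of the fibers is precisely what promotes the absence of zero divisors from a cancellation statement to outright invertibility; every other implication amounts to routine bookkeeping with $\di$, the distributive axioms, and the free Abelian structure of $\D$.
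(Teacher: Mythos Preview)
Your proof is correct and follows essentially the same approach as the paper's: both establish the equivalences by exploiting cancellation in the group $\D$, the one-dimensionality of the fibers, and the construction of a coherent section from a basis of $\D$. The only cosmetic differences are that you route the equivalence with (iv) through (iii) rather than through (i), and that you phrase (i)$\Rightarrow$(iii) as bijectivity of a linear map between one-dimensional spaces where the paper picks an explicit element in the inverse fiber---neither constitutes a genuinely different argument.
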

\begin{proof}
We show the first property to be equivalent to each of the other three.  First, if $Q$ is free of zero divisors and $qr_1 = qr_2$ for quantities $q\not= 0$, $r_1$ and $r_2$, then $\di(qr_1)=\di(qr_2)$, so $\di(r_1)=\di(r_2)$ because the cancellation law holds in $\D$.  Then it makes sense to write $0=qr_1-qr_2=q(r_1-r_2)$.  Since $q$ is not a zero, the absence of zero divisors implies $r_1-r_2=0$, so $r_1=r_2$ as desired.  For the other way around consider $q$ and $r_1$ are nonzero quantities such that $qr_1=0$ and choose another quantity $r_2$ in the same fiber as $r_1$.  Then $qr_1=qr_2=0$, but $r_1\not= r_2$, so the cancellation law does not hold.

Now let $q$ be a nonzero quantity and assume $Q$ is free of zero divisors. Let $\tilde q$ be a nonzero quantity in the inverse fiber of $q$, meaning $\di(\tilde q)=\di(q)^{-1}$. Therefore $q\tilde q$ is nonzero and dimensionless and, thus, there is a nonzero scalar $\alpha$ such that $q\tilde q=\alpha 1_Q$.  The quantity $s=\alpha^{-1} \tilde q$ satisfies $qs=1_Q$.  On the contrary, if $q$ and $r$ are nonzero quantities such that $qr=0$ and there is an inverse for $q$, say $s$, then $r=r1_Q=r(qs)=0$, a contradiction.

Finally, assume again $Q$ is free of zero divisors and define a section $\sigma:\D\to Q$ by assigning a nonzero element in the corresponding fiber to each element in a basis of $\D$ and the rest of elements by asking $\sigma$ to be a homomorphism.  The absence of zero divisors assures that $\sigma$ is a nonzero, in addition to coherent, section. On the contrary, assume now that $q$ and $r$ are nonzero elements of $Q$ such that $qr$ is a zero, and let $\sigma$ be a coherent section of $Q$.  Assume $\sigma(\di(q))$ and $\sigma(\di(r))$ are nonzero.  Then they are of the form $\sigma(\di(q))=\alpha q$ and $\sigma(\di(r))=\beta r$ for some nonzero $\alpha$ and $\beta$ in $F$. We have $\sigma(\di(qr))=(\alpha q) (\beta r)=\alpha\beta\, qr$ which is a zero.  Hence, a coherent section is not nonzero, so there exits no coherent system of units.
\end{proof}
The following example is the actual way in which quantity calculus is handled everyday.  Moreover, in section~\ref{sec:homomorphism} we justify that, to some extent, this is the only example of a space of quantities free of zero divisors.
\begin{example}\label{ex:standard-example}
For a field $F$ and a finitely generated free Abelian group $\D$, the set $F \times \D$ together with the projection map $\di\!\!:F\times\D \to \D$ which projects onto the second component, and the operations
\begin{eqnarray*}
(\alpha,\A)+(\beta,\A) = (\alpha+\beta, \A),\\
\beta (\alpha, \A) = (\beta \alpha, \A),\\
(\alpha, \A) (\beta, \B) = (\alpha \beta, \A \B),
\end{eqnarray*}
for any $\alpha, \beta \in F$ and $\A, \B \in \D$, becomes a space of quantities free of zero divisors. A coherent system of units is given from a group homomorphism $\chi:\D\to F^*$, where $F^*$ denotes the multiplicative group of the field, by $\sigma:\D\to F\times\D:\A \mapsto (\chi(\A),\A)$.
\end{example}

The first goal of a formalization of quantity calculus is to justify within the formalism the actual way in which operations between quantities are performed, that is, with the aid of a system of units and operating with the numerical values and with the units separately.  The following paragraphs do this.  Let us start by writing down the expression of any quantity in Maxwell's form.  Let $q$ be a quantity in a space $Q$ free of zero divisors, and let $\sigma$ be a system of units in $Q$.  The dimension of $q$ is $\di(q)$ and the unit in its fiber is $\sigma(\di(q))$. Now, since the latter is not a zero, by proposition~\ref{prop:no-zero-divisors} it has an inverse so we can define the map $\nu:Q \to F$, in which we make use of the identification of the field $F$ with the fiber $\di^{-1}(1_{\D})$ of quantities of dimension one, by
\begin{equation}\label{eq:map-nu}
\nu(q) = q \, \sigma(\di(q))^{-1}.
\end{equation}
The quantities $q$ and $\sigma(\di(q))$ have the same dimension, so the product in equation (\ref{eq:map-nu}) gives a quantity of dimension one which, after identification with an element of $F$, can be regarded as a number: the numerical value of $q$ with respect to the unit $\sigma(\di(q))$. Then we have
\begin{equation}\label{eq:Maxwell2}
q = \nu(q)\, \sigma(\di(q)),
\end{equation}
where we identify $\nu(q)$ with $\{q\}$ and $\sigma(\di(q))$ with $[q]$ as given in equation (\ref{eq:Maxwell}).  In other words, the symbols $\{\cdot\}$ and $[\cdot]$ are nothing but the maps $\{\cdot\} = \nu$ and $[\cdot]= \sigma \circ \di$.  Let us study their algebraic properties.
\begin{proposition}\label{prop:map-square}
In a space of quantities free of zero divisors, the map $[\cdot]:Q\to Q$ verifies
\begin{enumerate}
\item[(i)] for $q_1$, $q_2$ quantities in the same fiber, and $\alpha$, $\beta$ in $F$
\[
[\alpha q_1 + \beta q_2] = [q_1] = [q_2],
\]
\item[(ii)] it is a homomorphism with respect to the product of quantities if and only if $\sigma$ is a coherent section.
\end{enumerate}
\end{proposition}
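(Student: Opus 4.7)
The plan is to unpack the identification $[\cdot] = \sigma \circ \di$ established immediately before the proposition, so that both parts become statements about the interaction between $\di$ and $\sigma$, and then apply the axioms of Definition~\ref{def:space-of-quantities} together with Definition~\ref{def:section}.

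For part (i): I would invoke axiom (i) of Definition~\ref{def:space-of-quantities}, which says each fiber $\di^{-1}(\A)$ is a one dimensional vector space over $F$. Since $q_1$ and $q_2$ lie in a common fiber by hypothesis, the linear combination $\alpha q_1 + \beta q_2$ lies in that same fiber, hence $\di(\alpha q_1 + \beta q_2) = \di(q_1) = \di(q_2)$. Applying $\sigma$ on each side gives $[\alpha q_1 + \beta q_2] = [q_1] = [q_2]$. This step is purely formal.

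For part (ii), I would argue each implication separately. For the ``if'' direction, assume $\sigma$ is coherent; then for any $q, r \in Q$ I would chain the identities
\[
[qr] = \sigma(\di(qr)) = \sigma(\di(q)\,\di(r)) = \sigma(\di(q))\,\sigma(\di(r)) = [q][r],
\]
using that $\di$ is a monoid homomorphism (axiom (ii) of Definition~\ref{def:space-of-quantities}) at the second step and the group homomorphism property of $\sigma$ at the third. For the converse, assume $[\cdot]$ is a homomorphism and fix $\A, \B \in \D$; since $\di$ is surjective I pick $q, r \in Q$ with $\di(q) = \A$, $\di(r) = \B$, and read the hypothesis $[qr] = [q][r]$ backward through the same chain to conclude $\sigma(\A\B) = \sigma(\A)\sigma(\B)$, i.e.\ $\sigma$ is a group homomorphism.

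I do not anticipate a genuine obstacle: the argument is essentially bookkeeping between $\di$, $\sigma$, and the fiber structure. The only mild subtlety is making sure that, in the converse direction, $\sigma$ deserves the name ``group homomorphism'' in the sense of Definition~\ref{def:section} rather than merely a multiplicative map. This is automatic once one observes that $\sigma(1_{\D})$ lies in the nonzero part of the one dimensional fiber $\di^{-1}(1_{\D})$, and the identity $\sigma(1_{\D}) = \sigma(1_{\D})\sigma(1_{\D})$, read through the identification of that fiber with $F$, forces $\sigma(1_{\D}) = 1_Q$.
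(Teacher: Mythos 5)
Your proposal is correct and follows essentially the same route as the paper: both rest on the decomposition $[\cdot]=\sigma\circ\di$, deduce (i) from the fact that $\alpha q_1+\beta q_2$ stays in the fiber of $q_1$ and $q_2$, and handle (ii) by composing homomorphisms in one direction and using surjectivity of $\di$ in the other. Your closing remark about $\sigma(1_{\D})=1_Q$ is a harmless extra precaution the paper leaves implicit.
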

\begin{proof}
Both items stem directly from the splitting of $[\cdot]$ as the composition $\sigma \circ \di$.  For the first one, since $q_1$ and $q_2$ are in the same fiber, then so is $\alpha q_1 + \beta q_2$, so it is clear that $\di(\alpha q_1 + \beta q_2)= \di(q_1) = \di(q_2)$ and, therefore, the same applies to map $[\cdot]$.  For the second item if $\sigma$ is a group homomorphism, then $[\cdot]$ is the composition of two homomorphism with respect to the product, so it is also a homomorphism.  For the other way around, if $[\cdot]$ is a homomorphism, so is $\sigma$ because the map $\di$ is surjective.
\end{proof}

\begin{proposition}\label{prop:map-nu}
In a space of quantities free of zero divisors, the map $\{\cdot\}=\nu$ defined by equation (\ref{eq:map-nu}) is
\begin{enumerate}
\item[(i)] an $F$-linear homomorphism and
\item[(ii)] a homomorphism with respect to the product of quantities if and only if $\sigma$ is a coherent section.
\end{enumerate}
\end{proposition}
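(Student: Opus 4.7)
The plan is to derive both claims by direct computation from the axioms of Definition~\ref{def:space-of-quantities}, invoking Proposition~\ref{prop:no-zero-divisors} only to guarantee that $\sigma(\di(q))^{-1}$ exists whenever $\sigma$ is a nonzero section. Throughout I would rely on the monoid homomorphism property of $\di$ and the commutativity of the product in $Q$ to rearrange factors at will inside the expression $\nu(q)=q\,\sigma(\di(q))^{-1}$.

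For item (i), I would first observe that each fiber is a one-dimensional $F$-vector space, so $\di(\alpha q)=\di(q)$ and $\di(q_1+q_2)=\di(q_1)$ whenever $\di(q_1)=\di(q_2)$, which is what makes the formula for $\nu$ well posed on linear combinations. Scalar homogeneity $\nu(\alpha q)=\alpha\,\nu(q)$ then reduces to the identity $\alpha(qr)=(\alpha q)r$ of axiom (iii), applied with $r=\sigma(\di(q))^{-1}$. Additivity $\nu(q_1+q_2)=\nu(q_1)+\nu(q_2)$ follows from the distributivity clause of the same axiom, using that the inverse unit $\sigma(\di(q_1))^{-1}=\sigma(\di(q_2))^{-1}$ is common to both terms on the right. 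These are short calculations whose only subtlety is checking that the dimensions match so that the distributive identity is applicable.

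For item (ii), I would expand both sides of the candidate identity $\nu(qr)=\nu(q)\nu(r)$ to
\[
\nu(qr)=qr\,\sigma\!\left(\di(q)\di(r)\right)^{-1},
\qquad
\nu(q)\nu(r)=qr\,\sigma(\di(q))^{-1}\sigma(\di(r))^{-1},
\]
using that $\di$ is a monoid homomorphism and that the product in $Q$ is commutative and associative. When $q$ and $r$ are nonzero, $qr$ is nonzero by the absence of zero divisors, hence invertible by Proposition~\ref{prop:no-zero-divisors}, and cancellation reduces the equivalence to $\sigma(\di(q)\di(r))=\sigma(\di(q))\sigma(\di(r))$; the degenerate case in which $q$ or $r$ is a zero is trivial since both sides of the original identity then collapse to a zero. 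Finally, surjectivity of $\di$ together with the fact that $\sigma$ is nonzero lets me pass from arbitrary $q,r\in Q$ to arbitrary $\A,\B\in\D$, so the pointwise identity holds for all $q,r$ if and only if $\sigma(\A\B)=\sigma(\A)\sigma(\B)$ for all $\A,\B\in\D$, i.e.\ $\sigma$ is coherent.

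The main potential obstacle is purely bookkeeping: making sure all rearrangements of inverses inside the commutative monoid are legal, which Proposition~\ref{prop:no-zero-divisors} secures, and being careful that the quantifier passage in the equivalence at the end genuinely uses surjectivity of $\di$ rather than merely the existence of units above fixed dimensions.
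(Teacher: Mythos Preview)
Your proposal is correct and follows essentially the same approach as the paper: item (i) is handled by the common-unit observation $\di(\alpha q_1+\beta q_2)=\di(q_1)=\di(q_2)$ together with distributivity and the scalar axiom, and item (ii) reduces, after cancelling the invertible factor $qr$, to the coherence identity $\sigma(\A\B)=\sigma(\A)\sigma(\B)$ via surjectivity of $\di$. Your treatment is slightly more explicit than the paper's (you separate the zero case and spell out the quantifier passage), but the underlying argument is the same.
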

\begin{proof}
For the first item consider two quantities $q_1$ and $q_2$ in the same fiber and two scalars $\alpha$ and $\beta$ in the field $F$ and compute $\{\alpha q_1 + \beta q_2\} = (\alpha q_1 + \beta q_2)\, \sigma(\di(\alpha q_1 + \beta q_2))^{-1}$.  Since $\di(\alpha q_1 + \beta q_2)=\di(q_1)=\di(q_2)$ as in previous proposition, the former expression can be written as $\alpha q_1 \sigma(\di(q_1))^{-1} + \beta q_2 \sigma(\di(q_2))^{-1}$, that is, $\alpha \{q_1\} + \beta \{q_2\}$.

In the second item the if part is trivial.  For the only if part consider $\A$ and $\B$ in $\D$ and choose two nonzero quantities $q$ and $r$ such that $\A=\di(q)$ and $\B=\di(r)$.  Then $\sigma(\A\B) = qr\, \{qr\}^{-1}$ for, since $qr$ is not a zero, then $\{qr\} \not= 0$.  Now, because $\{\cdot\}$ is a homomorphism with respect to the product, the previous expresion gives $q\{q\}^{-1}\, r \{r\}^{-1}= \sigma(\A)\,\sigma(\B)$.
\end{proof}
These propositions set the condition to operate with quantities in the usual way: for quantities $q_1$ and $q_2$ in the same fiber and scalars $\alpha$ and $\beta$
\[
\{\alpha q_1 + \beta q_2 \} =  \alpha \{q_1\} + \beta \{q_2\}; \quad [\alpha q_1 + \beta q_2 ] = [q_1] = [q_2],
\]
and for any quantities $q$ and $r$, only in case of a coherent system of units,
\[
\{q r \} = \{q\} \{r\}; \quad [q r] = [q] [r].
\]

%%%%%%%%%%%%%%%%%%%%%%%%%% SECTION: SUBSPACE, SUBSECTION AND QUOTIENT SPACE %%%%%%%%%%%%%%%%%%%%%%%%%%%%%%%
\section{New spaces from old ones}\label{sec:new-spaces}

%%%%%%%%%%%%%%%%% SUBSECTION: SUBSPACE %%%%%%%%%
\subsection{Subspace}\label{subsec:subspace}
\begin{definition}\label{def:subspace}
A subset $S$ of a space of quantities $Q$ is a subspace if, with the operations of $Q$ and the restriction of the projection map, it is a space of quantities.
\end{definition}
Since the projection map $\di$ restricted to $S$ is the projection map of $S$, its image, $\di(S)$, must be a subgroup of $\D$.  Fortunately, it is a well known result of group theory that a subgroup of a free Abelian group is itself free Abelian \cite[Theorem 10.17]{Rot95}.  This is equivalent to the condition of $S$ being closed under the product of quantities.  In particular, $1_{\D}$ is in this subgroup.  The subset $S$ is also  closed under addition of quantities of the same fiber and product by scalars so, if $s$ is a nonzero quantity in $S$, then $\alpha s$, for any scalar $\alpha$, is also in $S$. In other words, the complete fiber containing $s$ is in $S$, as should be for the fibers in $S$ must be one dimensional vector spaces.  This observation rules out a fiber in $S$ containing only the zero element.  As a consequence, the fiber of dimensionless quantities is contained in $S$.  Thus, we have characterized the subspaces of a space of quantities as follows.
\begin{proposition}\label{prop:characterization-of-subspaces}
Let $Q$ be a space of quantities with projection map $\di\!\!:Q\to\D$.  A subset $S \subset Q$ is a subspace if and only if it is of the form $S=\di^{-1}(\E)$, where $\E$ is a subgroup of the group of dimensions $\D$.
\end{proposition}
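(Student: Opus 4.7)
The plan is to prove the two implications separately, mostly following the argument sketched in the paragraph preceding the statement.

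For the easy direction ($\Leftarrow$), I would start from an arbitrary subgroup $\E$ of $\D$, set $S:=\di^{-1}(\E)$, and verify the three clauses of Definition~\ref{def:space-of-quantities} for the pair $(S,\di|_S)$. Surjectivity of $\di|_S$ onto $\E$ is built into the preimage definition. The codomain $\E$ qualifies as a group of dimensions in the sense of Definition~\ref{def:group-of-dimensions} by the theorem (cited immediately before the statement) that subgroups of finitely generated free Abelian groups are again of that type. Each fiber over $\A\in\E$ is literally the $Q$-fiber, hence already a one dimensional $F$-vector space. Closure of $S$ under the product and the presence of $1_Q$ in $S$ both reduce to $\E$ being a subgroup: $\di(qr)=\di(q)\,\di(r)\in\E$ and $\di(1_Q)=1_{\D}\in\E$. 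The distributive law and the mixed associativity with scalars are inherited verbatim from $Q$.

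For the forward direction ($\Rightarrow$), let $S$ be a subspace and set $\E:=\di(S)$. Because $S$ is in its own right a space of quantities with projection map $\di|_S$, Definitions~\ref{def:space-of-quantities} and~\ref{def:group-of-dimensions} force $\E$ to be a finitely generated free Abelian group; in particular $\E$ is a subgroup of $\D$. The inclusion $S\subseteq\di^{-1}(\E)$ is immediate. For the converse, fix $\A\in\E$ and $q\in\di^{-1}(\A)$. The set $S\cap\di^{-1}(\A)$ is the $\A$-fiber of $S$, so by Definition~\ref{def:space-of-quantities}(i) applied to $S$ it is a one dimensional vector space over $F$, and in particular contains a nonzero element $s_0$. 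Since $S$ uses the operations inherited from $Q$, the span $\{\alpha s_0:\alpha\in F\}$ computed in $S$ coincides with the span computed in $Q$, which is the full one dimensional fiber $\di^{-1}(\A)$ of $Q$. Hence $q\in S$, completing the inclusion $\di^{-1}(\E)\subseteq S$.

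The main point requiring care, I expect, is the final step of the forward direction: one must argue that a fiber of the subspace $S$ cannot be a proper one dimensional $F$-subspace of a fiber of $Q$. The argument above handles this by exhibiting a nonzero vector common to both fibers and then invoking the fact that both spaces are one dimensional over the same field with the same scalar action, so their spans agree. A related subtlety, worth flagging explicitly in the write-up, is that Definition~\ref{def:subspace} only asks the product to restrict to a submonoid of $Q$; the inverses needed to turn $\E$ into a group do not come from closure of $S$ under inversion but from $\E$ itself being required to be a group of dimensions as part of $S$'s structure.
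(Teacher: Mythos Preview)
Your proposal is correct and follows essentially the same route as the paper, which proves the proposition via the discussion in the paragraph immediately preceding it: identify $\E=\di(S)$, use that fibers of $S$ must be one dimensional over $F$ to conclude each fiber of $Q$ meeting $S$ lies entirely in $S$, and invoke the fact that subgroups of finitely generated free Abelian groups are again of that type. Your write-up is more explicit about the reverse implication and about the subtlety that $\E$ is a subgroup by virtue of being a group of dimensions rather than by closure of $S$ under inversion, both of which are worthwhile clarifications the paper leaves implicit.
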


Trivially $Q=\di^{-1}(\D)$ is a subspace, arising from the improper subgroup of $\D$, and so is the fiber $\di^{-1}(1_{\D})$, the subspace arising from the trivial subgroup of $\D$.  Some nontrivial examples follow.
\begin{example}\label{ex:subspaces}
The space $Q_{\rm geom}$ is a subspace of $Q_{\rm kin}$, for $Q_{\rm geom} = \di^{-1}(\langle \LL \rangle)$, and $\langle \LL \rangle$ is a subgroup of $\D_{\rm kin}$.  Analogously, $Q_{\rm kin}$ is a subspace of $Q_{\rm mech}$, which in turn is a subspace of $Q_{\rm phys}$.

But also $\di^{-1}(\langle \LL^2\rangle)$ is a subspace of $Q_{\rm geom}$ with group of dimensions $\langle \LL^2 \rangle$.
\end{example}

%%%%%%%%%%%%%%%%% SUBSECTION: TENSOR PRODUCT %%%%%%%%%
\subsection{Tensor product}\label{subsec:tensor}
From the examples one intuitively expects to be able to build the space $Q_{kin}$ of kinematics quantities from $Q_{geom}$ and $Q_{time}$, the spaces of quantities of geometry and time, respectively.  The technique is somewhat similar to the tensor product of linear spaces so we adopt the notation.  Let $Q$ and $R$ be spaces of quantities free of zero divisors, over the field $F$ and with groups of dimensions $\D_Q$ and $\D_R$ respectively, and projection maps $\di_Q$ and $\di_R$.

In the set $Q\times R$ define the element $(q_1,r_1)$ to be related to $(q_2,r_2)$ if there is $\alpha \in F$ such that $q_1=\alpha q_2$ and $r_2=\alpha r_1$ or such that $q_2=\alpha q_1$ and $r_1=\alpha r_2$.  It is straightforward, though tedious, to check it is an equivalence relation.  The quotient set is denoted $Q \otimes R$ and the equivalence class of the element $(q,r)$ is denoted $q \otimes r$.  Notice $\alpha q \otimes r = q \otimes \alpha r$, in particular $q \otimes 0 = 0 \otimes 0 = 0 \otimes r$.

We now define a structure of space of quantities in $Q \otimes R$.  Its group of dimension is the direct product of $\D_Q$ and $\D_R$, which is a free Abelian group.  The projection map is defined by $\di(q\otimes r)=(\di_Q(q), \di_R(r))$, which is well defined because all the elements in the class $q\otimes r$ have the same image under $\di$.  Define a product in $Q\otimes R$ by $(q_1 \otimes r_1)(q_2\otimes r_2) = (q_1q_2) \otimes (r_1r_2)$, which is independent of the representatives chosen, is commutative and associative and has an identity element: the class $1_Q \otimes 1_R$.

Define the product of the scalar $\gamma \in F$ times $q\otimes r$ by $\gamma (q\otimes r)=(\gamma q) \otimes r = q \otimes (\gamma r)$.  Finally, define the addition of two elements in the same fiber $q_1 \otimes r_1$ and $q_2 \otimes r_2$ in the following manner.  From the absence of zero divisors and proposition~\ref{prop:no-zero-divisors} there are nonzero $q \in Q$ and $r \in R$ such that $q_i = \alpha_i q$ and $r_i = \beta_i r$ for some $\alpha_i,\, \beta_i  \in F$, $i\in\{1,2\}$; define $q_1\otimes r_1 + q_2\otimes r_2 = (\alpha_1\beta_1 + \alpha_2\beta_2) (q\otimes r)$.  The addition and product by scalars in the set of elements of a fiber satisfy the properties of a vector space over $F$ and, moreover, this vector space is of dimension one, for, if $q$ and $r$ are nonzero elements and $\alpha$ and $\beta$ are arbitrary scalars, then $(\alpha q) \otimes (\beta r) = (\alpha \beta) (q \otimes r)$.  The zero element in each fiber is $0 \otimes 0$.

Finally, it is also straightforward to see the projection map behaves well under the product: $\di((q_1\otimes r_1)(q_2\otimes r_2))=\di(q_1\otimes r_1)\di(q_2\otimes r_2)$.  Then we have the following result.
\begin{proposition}\label{prop:tensor-product}
The set $Q\otimes R$, together with the operations defined above, is a space of quantities over the field $F$ with group of dimensions $\D_Q \times \D_R$ and rank ${\rm rank}(Q) + {\rm rank}(R)$.
\end{proposition}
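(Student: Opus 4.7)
The plan is to verify each clause of Definition~\ref{def:space-of-quantities} for $Q\otimes R$, most of which has been prepared in the paragraphs preceding the statement; the task is essentially to check that everything is well defined on equivalence classes and that the axioms transfer. Before anything else, observe that $\D_Q\times\D_R$ is finitely generated and free Abelian, with rank equal to ${\rm rank}(Q)+{\rm rank}(R)$, so it is indeed a group of dimensions in the sense of Definition~\ref{def:group-of-dimensions}; this settles the rank claim.

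First I would confirm that the projection $\di\!\!:Q\otimes R\to\D_Q\times\D_R$ is well defined on equivalence classes and surjective. Well-definedness is immediate because equivalent pairs differ by a nonzero scalar factor in one coordinate and its inverse in the other, and scalar multiplication preserves the dimension in both $Q$ and $R$. Surjectivity follows by picking nonzero representatives in $\di_Q^{-1}(\A)$ and $\di_R^{-1}(\B)$, which exist because both projections are surjective.

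The core of the proof is item (i) of Definition~\ref{def:space-of-quantities}: each fiber $\di^{-1}(\A,\B)$ is a one-dimensional $F$-vector space. Fix nonzero $q\in\di_Q^{-1}(\A)$ and $r\in\di_R^{-1}(\B)$. Any element of the fiber is of the form $q'\otimes r'$ with $\di_Q(q')=\A$ and $\di_R(r')=\B$, so $q'=\alpha q$ and $r'=\beta r$ by the one-dimensionality of the fibers of $Q$ and $R$, giving $q'\otimes r'=(\alpha\beta)(q\otimes r)$; this exhibits $q\otimes r$ as a generator. The main obstacle I anticipate is verifying that the addition defined in the paragraph preceding the proposition does not depend on the common representatives $(q,r)$ of both summands: replacing $(q,r)$ by $(\lambda q,\mu r)$ with $\lambda,\mu\in F^{*}$ rescales each $\alpha_i$ to $\alpha_i/\lambda$ and each $\beta_i$ to $\beta_i/\mu$, so the coefficient $\alpha_1\beta_1+\alpha_2\beta_2$ scales by $1/(\lambda\mu)$; but the new basis element $(\lambda q)\otimes(\mu r)$ equals $\lambda\mu(q\otimes r)$, so the resulting class is unchanged. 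The vector space axioms then transfer from those of $F$, and dimension one has already been shown.

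Finally I would check the remaining items. For (ii), the product $(q_1\otimes r_1)(q_2\otimes r_2)=(q_1q_2)\otimes(r_1r_2)$ is well defined on classes because scalars pull through both factors; commutativity, associativity and the identity $1_Q\otimes 1_R$ transfer from $Q$ and $R$; and $\di$ is a monoid homomorphism by construction on representatives. For (iii), distributivity and associativity with scalars reduce, after expressing the relevant elements in terms of common nonzero representatives, to the corresponding identities in $F$. In every step where a nonzero representative of a fiber or the existence of an inverse is needed, Proposition~\ref{prop:no-zero-divisors} supplies it, which is why the construction assumes $Q$ and $R$ free of zero divisors.
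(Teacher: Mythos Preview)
Your proposal is correct and follows essentially the same approach as the paper: the paper gives no separate proof of the proposition, but instead lays out the construction in the preceding paragraphs and declares the remaining verifications ``straightforward''; your outline fills in exactly those verifications of Definition~\ref{def:space-of-quantities}. If anything, you are more careful than the paper on one point---checking that the addition in a fiber is independent of the chosen common pair $(q,r)$---which the paper asserts without comment.
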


The spaces $Q$ and $R$ can be identified, respectively, with $Q \otimes \di_R^{-1}(1_{\D_R})$ and $\di_Q^{-1}(1_{\D_Q}) \otimes R$, which are subspaces of $Q \otimes R$.

\begin{example}
As announced before, we have $Q_{kin} = Q_{geom} \otimes Q_{time}$.  Also $Q_{mech} = Q_{kin}\otimes Q_{mass}$.
\end{example}

%%%%%%%%%%%%%%%%% SUBSECTION: QUOTIENT %%%%%%%%%
\subsection{Quotient space}\label{subsec:quotient}
The quotient space is a construction intended to reduce the rank of a space of quantities by identifying certain quantities of different dimensions. The quotient cannot be taken with respect to a subspace, but another kind of subset of $Q$, namely, that given by a subsection.
\begin{definition}\label{def:subsection}
Let $Q$ be a space of quantities free of zero divisors.  A subsection of $Q$ is the restriction of a nonzero coherent section $\sigma:\D\to Q$ to a subgroup $\E$ of $\D$.
\end{definition}
Its image $\Sigma = \sigma(\E)$, which is also called subsection for brevity, is the intersection of the subspace $\di^{-1}(\E)$ and $\sigma(\D)$, the image of the section.  Notice that $1_Q \in \Sigma$ because the section $\sigma$ is coherent.  With the aid of $\Sigma$ we can define an equivalence relation in $Q$.  The quantity $q_2$ is equivalent modulo $\Sigma$ to the quantity $q_1$ if $q_2=q_1 s$ for some quantity $s$ in $\Sigma$.  It is reflexive for, as noticed before, $1_Q \in \Sigma$.  It is symmetric, because $s$ is not a zero and, by the absence of zero divisors, it is invertible in $Q$ and $q_1=q_2 s^{-1}$, where $s^{-1}$ is in $\Sigma$ because the section is coherent.  Finally, if $q_2=q_1 s$ and $q_3=q_2 s'$ for $s$ and $s'$ in $\Sigma$ then $q_3=q_1 s s'$, so $q_3$ is equivalent modulo $\Sigma$ to $q_1$ since $s s'$ is in $\Sigma$ by the coherence of $\sigma$.

The quotient set of this equivalence relation is denoted $Q/\Sigma$ and its elements, the equivalence classes, are of the form $q\Sigma$, which denotes the set of the elements $qs$ with $s$ running in $\Sigma$.  We now provide the quotient set with suitable operations to convert it into a space of quantities.  First we describe its group of dimensions.  Since equivalent elements $q_1$ and $q_2=q_1 s$ are identified in the quotient set, their dimensions must be identified as well. The obvious candidate for the group of dimensions is, thus, the quotient group $\D/\E$.  In such a case it is only natural to define the projection map, $\widehat \di\!\!:Q/\Sigma\to\D/\E$ by making the following diagram commutative.
\[
\begin{CD}
Q @>{\rho}>> Q/\Sigma \\
@V{\di}VV @VV{\widehat\di}V \\
\D @>{\hat\rho}>> \D/\E
\end{CD}
\]
where the maps $\rho$ and $\hat\rho$ are the natural projections of each set into its respective quotient set.  In other words, $\hat\rho \circ \di = \widehat \di \circ \rho$.  Unfortunately, the quotient of a free Abelian group is not necessarily free Abelian and, hence, $\D/\E$ does not necessarily qualify as a group of dimensions.  Therefore, though the algebraic structure is well defined, the subsection must be carefully chosen so as the subgroup $\E$ makes the quotient $\D/\E$ a free Abelian group.  For instance, in the group of dimensions of kinematics quantities, $\D_{\rm kin}$ generated by $\LL$ and $\TT$, the subgroup $\E_1=\langle \LL \rangle$ gives $\D/\E_1 \cong \langle \TT\rangle$, which is free Abelian, while the subgroup $\E_2=\langle \LL^2 \rangle$ gives $\D/\E_2 \cong \langle \LL \rangle / \langle \LL^2 \rangle  \times \langle \TT\rangle$, which is not free Abelian.  From now on we assume that $\E$ is chosen so as to make $\D/\E$ free Abelian.

The product in $Q/\Sigma$ is defined by the rule $\big ( q_1 \Sigma\big) \big ( q_2 \Sigma\big) = (q_1 q_2) \Sigma$ which is easily checked to be independent of representatives.  We have to check the condition which links the product and the projection map, but $\widehat\di\big( q_1\Sigma q_2\Sigma \big)=\widehat\di\big( (q_1q_2) \Sigma\big)=\widehat\di \circ \rho (q_1q_2)$ by the definition of the product in $Q/\Sigma$ and the definition of $\rho$.  Now, by the commutativity of the diagram and because both, $\di$ and $\hat \rho$, are homomorphisms, the latter expression equals $\hat\rho \circ \di (q_1q_2)=\hat\rho \circ \di(q_1) \hat\rho \circ \di(q_2)= \widehat\di(q_1 \Sigma) \widehat\di(q_2\Sigma)$, so we conclude that $\widehat\di$ is a monoid homomorphism.

The product with a scalar $\alpha$ from the field $F$ is defined by $\alpha \big ( q \Sigma \big ) = (\alpha q) \Sigma$, which is also independent of the choice of representative $q$ in the class $q \Sigma$. For the addition notice that if $q_1 \Sigma$ and $q_2 \Sigma$ are elements in the same fiber in $Q/\Sigma$, i.e. $\widehat\di(q_1\Sigma) = \widehat\di(q_2\Sigma)$, its sum cannot be defined simply as $(q_1 + q_2) \Sigma$, because $q_1$ and $q_2$ need not be in the same fiber in $Q$.  We only know $\di(q_2) = \di(q_1) \A$ for some $\A \in \E$.  Denote $s=\sigma(\A)$, an element in $\Sigma$, and define $q_1'=q_1 s$, so $q_1 \Sigma = q_1' \Sigma$, hence $\di(q_1') = \di(q_1)\di(s) = \di(q_2)$, so they are in the same fiber in $Q$.  Now we can define the addition as $\big ( q_1 \Sigma \big ) + \big( q_2 \Sigma \big ) = (q_1' + q_2) \Sigma$.  We could have taken instead an equivalent element of $q_2$ in the fiber of $q_1$ getting the same result. In the fiber of $q\Sigma$, the zero element is the class $q_0\Sigma$, where $q_0$ is the zero in the fiber of $q$, and is formed by the zeros of the fibers of $Q$ represented in the class $q\Sigma$.

It is straightforward to check that the conditions of definition~\ref{def:space-of-quantities} hold for $Q/\Sigma$, so we state the result as follows.
\begin{proposition}\label{prop:quotient}
If $\E$ is a subgroup of $\D$ such that $\D/\E$ is free Abelian then the set $Q/\Sigma$, together with the operations defined above, is a space of quantities with group of dimensions $\D/\E$ and rank given by ${\rm rank}(\D)-{\rm rank}(\E)$.
\end{proposition}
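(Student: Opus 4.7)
The plan is to verify the three axioms of Definition~\ref{def:space-of-quantities} in turn, using the constructions already set up in the paragraphs preceding the statement, and then to derive the rank formula from standard facts about free Abelian groups. Throughout, the recurring technical task is well-definedness: every operation on $Q/\Sigma$ is given via representatives, so I must check that different choices of representatives of a class, and different choices of the element $s \in \Sigma$ used to move between fibers, yield the same result. Each such check reduces to multiplication by an element of $\Sigma$ and uses two facts already available: $\Sigma$ is closed under products and inverses because $\sigma$ is coherent, and every $s \in \Sigma$ is invertible in $Q$ by Proposition~\ref{prop:no-zero-divisors}.

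For axiom (i), I would fix a class $q\Sigma$ and verify that the addition $(q_1\Sigma)+(q_2\Sigma)=(q_1'+q_2)\Sigma$, with $q_1' = q_1\sigma(\A)$ for the unique $\A \in \E$ satisfying $\di(q_2)=\di(q_1)\A$, together with $\alpha(q\Sigma)=(\alpha q)\Sigma$, gives an $F$-vector space structure on the fiber $\widehat\di^{-1}(\hat\rho(\di(q)))$. Associativity, commutativity, existence of an additive inverse, and the neutral element $q_0\Sigma$ all descend from the fiber-wise vector space structure in $Q$, once one fixes a common fiber of $Q$ to work in. One-dimensionality follows because, if $q\Sigma$ is nonzero, any other class in the same fiber of $Q/\Sigma$ has a representative in the fiber of $q$ in $Q$ (by multiplying by a suitable element of $\Sigma$), and is therefore a scalar multiple of $q$ in that one-dimensional fiber.

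For axiom (ii), the monoid identities (commutativity, associativity, identity $1_Q\Sigma$) descend directly from $Q$, while surjectivity of $\widehat\di$ follows from the surjectivities of $\di$ and $\hat\rho$; the homomorphism property $\widehat\di((q_1\Sigma)(q_2\Sigma))=\widehat\di(q_1\Sigma)\widehat\di(q_2\Sigma)$ was already established via the commutative diagram. For axiom (iii), both the distributive law and the scalar-product associativity reduce, after bridging to a common fiber of $Q$ with an element of $\Sigma$ and choosing representatives there, to the corresponding identities in $Q$ itself, followed by projection to $Q/\Sigma$.

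For the rank assertion, I would invoke the standard structure result that if $\E$ is a subgroup of the finitely generated free Abelian group $\D$ and $\D/\E$ is itself free Abelian, then a basis of $\D$ can be chosen whose initial segment is a basis of $\E$, yielding $\mathrm{rank}(\D)=\mathrm{rank}(\E)+\mathrm{rank}(\D/\E)$. The part I expect to require the most care is the well-definedness of addition: showing that $(q_1'+q_2)\Sigma$ depends neither on the representatives $q_1,q_2$ nor on the bridging element $\sigma(\A)$. This will come down to the observation that replacing $q_1$ by $q_1 t_1$ and $q_2$ by $q_2 t_2$ with $t_1,t_2 \in \Sigma$ alters the bridging exponent $\A$ in a controlled way, and coherence of $\sigma$ forces the new $q_1'+q_2$ to differ from the old one by multiplication by an element of $\Sigma$, leaving the class in $Q/\Sigma$ unchanged.
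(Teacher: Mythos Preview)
Your proposal is correct and follows the same approach as the paper: direct verification of the axioms of Definition~\ref{def:space-of-quantities} using the constructions set up in the paragraphs preceding the proposition. In fact the paper gives no separate proof at all, declaring the verification ``straightforward'' after having defined the operations and checked that $\widehat\di$ is a monoid homomorphism; your outline simply fills in the details the paper omits, including the well-definedness of addition and the rank formula via the structure theorem for subgroups of free Abelian groups.
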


The mechanism of taking quotients is the algebraic tool underlying what is common practice in Physics of choosing ``systems of units" such that some specified universal constants become dimensionless and take on the numerical value 1, as is shown in the following examples. But it has to be remarked that the mechanism goes beyond a change of system of units; it is indeed a change of space of quantities.
\begin{example}\label{ex:reduction-of-units}
In the space of quantities of kinematics, $Q_{\rm kin}$, of rank 2, the group of dimensions can be generated as well by $\TT$ and $\LL\TT^{-1}$, so we write $\D_{\rm kin}=\langle \TT, \LL\TT^{-1} \rangle$. The speed of light, $c$, is a quantity in this space with $\di(c)=\LL\TT^{-1}$. Consider the subgroup $\E=\langle \LL\TT^{-1} \rangle$ and a nonzero coherent section $\sigma$ such that $\sigma(\LL\TT^{-1}) = c$.  The subsection obtained by the restriction of $\sigma$ to $\E$ defines the subset of the quantity $c$ and all its powers: $\Sigma = \{\dots c^{-2}, c^{-1}, 1_{Q_{\rm kin}}, c, c^2, \dots\}$.  Finally, the quotient space $Q_{\rm kin}/\Sigma$ is made of the classes of kinematics quantities modulo $\Sigma$, with group of dimensions $\D_{\rm kin}/\E = \langle T\E \rangle$, free Abelian of rank 1.  That is, all the quantities in the quotient space have dimensions of time and its powers, the fiber of quantities of dimension one contains $c$ and all its powers which are equivalent to the identity $1_{Q_{\rm kin}}$.  This is common practice, for instance, when dealing with Special Relativity, where all kinematics quantities are measured as time.
\end{example}
\begin{example}\label{ex:Plank's-units}
Consider the construction underlying Planck's units, an instance of natural units.  In the space of quantities of Physics, $Q_{\rm phys}$, of rank 5, the group of dimensions was presented in example~\ref{ex:groups-of-dimensions} as generated by $\LL, \TT, \MM, \I, \Theta$.  We take a quotient such that the universal constants $c$, the speed of light, $h$, Planck's constant, $G$, the gravitational constant, $k_{\rm C}$, Coulomb's constant and $k_{\rm B}$, Boltzmann's constant, are set to 1.   It is straightforward to check that the five constants are dimensionally independent, that is, their dimensions $\di(c) = \LL\TT^{-1}$, $\di(h)=\LL^2\TT^{-1}\MM$, $\di(G)=\LL^3\TT^{-2}\MM^{-1}$, $\di(k_{\rm C})=\LL^3\TT^{-2}\MM\I^{-2}$ and $\di(k_{\rm B})=\LL^2\TT^{-2}\MM\Theta^{-1}$ are independent in $\D_{\rm phys}$ and, moreover, the set
\[
\{\LL\TT^{-1}, \LL^2\TT^{-1}\MM, \LL^3\TT^{-2}\MM^{-1}, \LL^3\TT^{-2}\MM\I^{-2}, \LL^2\TT^{-2}\MM\Theta^{-1} \}
\]
is a basis of the group.  Now the coherent section defined on this five generators by
\[
\begin{array}{l}
\sigma(\LL\TT^{-1}) = c, \\
\sigma(\LL^2\TT^{-1}\MM) = h, \\
\sigma(\LL^3\TT^{-2}\MM^{-1}) = G, \\
\sigma(\LL^3\TT^{-2}\MM\I^{-2}) = k_{\rm C}, \\
\sigma(\LL^2\TT^{-2}\MM\Theta^{-1}) = k_{\rm B}, \\
\end{array}
\]
is clearly nonzero, so it defines a subsection (when restricted to the improper subgroup $\E=\D_{\rm phys}$).  The quotient space $Q_{\rm phys}/\Sigma$ is a space of quantities of rank 0 since its group of dimensions is the trivial group. Hence, all the quantities are dimensionless and the five aforementioned constants are equivalent to the identity quantity.
\end{example}

%%%%%%%%%%%%%%%%%%%%%%%%%% SECTION: HOMOMORPHISM OF SPACES OF QUANTITIES. ISOMORPHIC SPACES %%%%%%%%%%%%%%%%%%%%%%%%%%%%%%%
\section{Homomorphism of spaces of quantities. Isomorphic spaces}\label{sec:homomorphism}

In this section the tool for comparison of spaces of quantities is defined and its properties studied.  The goal is the classification of spaces of quantities, which is achieved in theorem~\ref{th:classification}.
\begin{definition}\label{def:homomorphism}
Let $Q$ and $R$ be spaces of quantities over the field $F$.  A map $\psi:Q\to R$ is a homomorphism of spaces of quantities if
\begin{enumerate}
\item[(i)] for any two quantities $q_1$, $q_2$ in $Q$
\[
\psi(q_1q_2) = \psi(q_1) \psi(q_2),
\]
that is, it is a monoid homomorphism with respect to the product, and
\item[(ii)] if $q_1$ and $q_2$ are quantities in the same fiber of $Q$, then $\psi(q_1)$ and $\psi(q_2)$ are in the same fiber in $R$ and
\[
\psi(\alpha q_1 + \beta q_2) = \alpha \psi(q_1) + \beta \psi(q_2), 
\]
for $\alpha$ and $\beta$ in $F$, so $\psi$ is a linear map in each fiber.
\end{enumerate}
\end{definition}

The homomorphism $\psi$ induces a group homomorphism between the base groups, $\D_Q$ and $\D_R$. If $\di_Q$ and $\di_R$ are the respective projection maps, define the map $\phi:\D_Q\to\D_R$ so that the following diagram commutes,
\[
\begin{CD}
Q @>{\psi}>> R \\
@V{\di_Q}VV @VV{\di_R}V \\
\D_Q @>{\phi}>> \D_R
\end{CD}
\]
that is, $\di_R \circ \psi = \phi \circ \di_Q$.  It is well defined because $\di_Q$ and $\di_R$ are surjective and $\psi$ preserves fibers and it is straightforward to check that $\phi$ is a group homomorphism. 

The map $\phi$ says what fibers of $Q$ are mapped into each fiber of $R$.  As an example, if $q$ is a quantity of dimension one in $Q$ then $\di_R(\psi(q))=\phi(\di_Q(q))=\phi(1_{\D_Q})=1_{\D_R}$, so the fiber of quantities of dimension one in $Q$ is mapped to the fiber of quantities of dimension one in $R$.  

An isomorphism $Q\to R$ is a bijective homomorphism and define $Q$ and $R$ as isomorphic spaces, denoted $Q\cong R$. The basic maps relating the spaces of quantities defined so far, i.e. the identity map ${\rm id}_Q:Q\to Q$, the inclusion map $i:Q\hookrightarrow R$, where $Q$ is a subspace of $R$, and the natural projection map $\rho:Q \to Q/\Sigma$, where $\Sigma$ is the image of a subsection of $Q$, are all homomorphisms of spaces of quantities.  The trivial homomorphism is the map which sends every element in $Q$ to the dimensionless zero of $R$.  By a zero homomorphism we understand a homomorphism in which all the elements in $Q$ are mapped to zero elements in $R$, such as the trivial map, but there are other zero homomorphisms, as many as group homomorphisms between $\D_Q$ and $\D_R$.  For given such a group homomorphism $\phi$, which in turn defines which fibers in $Q$ are mapped to which fibers in $R$, it is enough to define $\psi:Q\to R$ by sending each $q \in Q$ to the zero element of the fiber assigned by $\phi$.

In fact it is necessary to understand the behaviour of fibers and zeros under a homomorphism.  It is clear that the image of a zero is a zero.  If $q$ is a nonzero element of $Q$ but $\psi(q)$ is a zero in $R$, then all the fiber of $q$ is mapped to the same zero, for $\psi(\alpha q)=\alpha \psi(q)$ which is the same zero for any $\alpha \in F$.  On the other hand, if $\psi(q)$ is not a zero, then the fiber of $q$ is mapped isomporphically (as vector spaces) to the fiber of $\psi(q)$.  In particular, if $\psi(1_Q)$ is zero, then the homomorphism is a zero homomorphism, for $\psi(q) = \psi(q 1_Q) = \psi(q) \psi(1_Q)$ which is a zero for any $q$.  This expresion also proves that if $\psi(1_Q)$ is not a zero then it is $1_R$, the identity in $R$.

So far a homomorphism can be defined by setting which fiber of $R$ is the image of each fiber of $Q$ and by setting which fibers of $Q$ are mapped to zero and which of them are mapped isomorphically to their corresponding fibers.  In the case of interest of spaces free of zero divisors the result can be improved.
\begin{proposition}\label{prop:homomorphism-without-zero-divisors}
Let $Q$ be a space of quantities free of zero divisors and $\psi:Q\to R$ a homomorphism of spaces of quantities.  Then $\psi$ is a zero homomorphism if and only if $\psi(1_Q)$ is the dimensionless zero.
\end{proposition}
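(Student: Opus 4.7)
The plan is to prove both implications directly, with the key observation being that $\psi(1_Q)$ is automatically forced into the dimensionless fiber of $R$ by the induced group homomorphism $\phi:\D_Q\to\D_R$ on bases.

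First I would handle the forward implication. Assume $\psi$ is a zero homomorphism, so $\psi(q)$ is a zero for every $q\in Q$; in particular $\psi(1_Q)$ is a zero in $R$. To pin down its dimension, use the commutative diagram defining $\phi$: since $\di_R\circ\psi = \phi\circ\di_Q$ and $\di_Q(1_Q)=1_{\D_Q}$, we get $\di_R(\psi(1_Q))=\phi(1_{\D_Q})=1_{\D_R}$, because group homomorphisms send identity to identity. Thus $\psi(1_Q)$ lies in the dimension-one fiber of $R$, and since it is a zero it is precisely the dimensionless zero.

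For the reverse implication, assume $\psi(1_Q)$ is the dimensionless zero of $R$. For any $q\in Q$ the monoid-homomorphism property of $\psi$ gives
\[
\psi(q) = \psi(q\cdot 1_Q) = \psi(q)\,\psi(1_Q).
\]
Since $\psi(1_Q)$ is a zero, and since in any space of quantities the product of an element with a zero is again a zero (an easy consequence of distributivity, as noted in the paper), $\psi(q)\,\psi(1_Q)$ is a zero in $R$. Hence $\psi(q)$ is a zero for every $q\in Q$, which is exactly the definition of a zero homomorphism.

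I do not anticipate any real obstacle: both directions reduce to applying the two defining clauses of a homomorphism together with the standard behavior of zeros. The freedom from zero divisors in $Q$ is, strictly speaking, not used in this particular argument, but it is the standing hypothesis under which the surrounding discussion is being developed, so I would simply retain it without commentary. The only point deserving an explicit line is the dimension bookkeeping via $\phi$ in the forward direction, since otherwise one might be tempted to conclude merely that $\psi(1_Q)$ is \emph{some} zero rather than the dimensionless one.
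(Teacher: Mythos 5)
Your proof is correct, and the reverse implication is word-for-word the paper's (the author proves it in the paragraph preceding the proposition and just cites it as ``already proved''). Where you genuinely diverge is the forward implication. You argue directly: a zero homomorphism sends $1_Q$ to a zero by definition, and the induced homomorphism $\phi$ forces $\di_R(\psi(1_Q))=\phi(1_{\D_Q})=1_{\D_R}$, so that zero must be the dimensionless one. The paper instead argues by contrapositive: assuming $\psi(1_Q)$ is not a zero, it takes an arbitrary nonzero $q\in Q$, invokes the absence of zero divisors (via Proposition~\ref{prop:no-zero-divisors}) to produce $q^{-1}$, and deduces from $\psi(1_Q)=\psi(q)\psi(q^{-1})$ that $\psi(q)$ is not a zero. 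Your route is shorter and, as you correctly observe, does not use the zero-divisor hypothesis at all, so it proves the literal statement in greater generality. What the paper's route buys is a strictly stronger dichotomy that your argument does not yield: for $Q$ free of zero divisors, a homomorphism either kills everything or kills no nonzero element whatsoever. That stronger fact (not just the stated equivalence) is what is silently used afterwards, e.g.\ in Proposition~\ref{prop:properties-homomorphisms}, where a nonzero homomorphism is assumed to map every fiber isomorphically onto a fiber. So your proof is a valid and cleaner proof of the proposition as stated, but if you were reconstructing the paper you would still need the inverse argument somewhere to support the later results.
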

\begin{proof}
The if part has already been proved.  Assume now that $\psi(1_Q)$ is not a zero and let $q$ be a nonzero element of $Q$ which, thus, has an inverse $q^{-1}$.  Since $\psi(1_Q)=\psi(qq^{-1})=\psi(q) \psi(q^{-1})$ is not a zero, we conclude that $\psi(q)$ is not a zero.
\end{proof}
Of course, only the nonzero homomorphisms are of interest for us to be able to compare spaces of quantities, so from now on we only consider this kind of homomorphisms.  The following are basic properties of homomorphisms of spaces of quantities.
\begin{proposition}\label{prop:properties-homomorphisms}
Let $\psi:Q\to R$ be a nonzero homomorphism of spaces of quantities.  Then:
\begin{enumerate}
\item[(i)] the image of a subspace of $Q$ is a subspace of $R$,
\item[(ii)] the preimage of a subspace of $R$ is a subspace of $Q$,
\item[(iii)] the preimage of a section of $R$ is a section of $Q$ and
\item[(iv)] the preimage of a subsection of $R$ is a subsection of $Q$.
\end{enumerate}
\end{proposition}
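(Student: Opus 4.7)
The plan is to exploit two results from the preceding development. First, Proposition~\ref{prop:characterization-of-subspaces} characterises subspaces as exactly the $\di$-preimages of subgroups of the group of dimensions. Second, since we are working throughout with spaces free of zero divisors and with nonzero homomorphisms, Proposition~\ref{prop:homomorphism-without-zero-divisors} together with the discussion preceding it guarantees that no fibre of $Q$ is collapsed to a zero; hence $\psi$ restricts on each fibre $\di_Q^{-1}(\A)$ to an isomorphism of one-dimensional $F$-vector spaces onto the target fibre $\di_R^{-1}(\phi(\A))$, where $\phi:\D_Q\to\D_R$ is the induced group homomorphism determined by the commutative square $\di_R\circ\psi=\phi\circ\di_Q$.

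For (i), let $S=\di_Q^{-1}(\E)$ for a subgroup $\E\subseteq\D_Q$. Fibrewise bijectivity yields $\psi(S)=\di_R^{-1}(\phi(\E))$, and since $\phi(\E)$ is a subgroup of $\D_R$, Proposition~\ref{prop:characterization-of-subspaces} gives that $\psi(S)$ is a subspace of $R$. For (ii), the commutativity $\di_R\circ\psi=\phi\circ\di_Q$ translates to the set identity $\psi^{-1}(\di_R^{-1}(\E'))=\di_Q^{-1}(\phi^{-1}(\E'))$; since $\phi^{-1}(\E')$ is a subgroup of $\D_Q$, the same characterisation applies.

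For (iii), given a section $\tau:\D_R\to R$, I would define $\sigma:\D_Q\to Q$ by letting $\sigma(\A)$ be the unique element of $\di_Q^{-1}(\A)$ with $\psi(\sigma(\A))=\tau(\phi(\A))$; existence and uniqueness come from fibrewise bijectivity. Then $\di_Q\circ\sigma=\mathrm{id}_{\D_Q}$, and one checks that $\sigma(\D_Q)$ coincides with $\psi^{-1}(\tau(\D_R))$, so the preimage is indeed the image of a section. For (iv), write the given subsection as $\tau|_{\E'}$ with $\tau:\D_R\to R$ coherent and nonzero and $\E'\subseteq\D_R$ a subgroup, and apply the construction of (iii) to the full $\tau$ to obtain $\sigma:\D_Q\to Q$. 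A short chase of fibres shows $\psi^{-1}(\tau(\E'))=\sigma(\phi^{-1}(\E'))$, so it suffices to verify that $\sigma$ is itself nonzero and coherent; its restriction to $\phi^{-1}(\E')$ is then a subsection of $Q$ with the required image. Nonzeroness is automatic, because $\sigma(\A)$ being a zero would force $\tau(\phi(\A))=\psi(\sigma(\A))=0$, contradicting that $\tau$ is nonzero. For coherence, apply $\psi$ to both $\sigma(\A\B)$ and $\sigma(\A)\sigma(\B)$: using the coherence of $\tau$ and the multiplicativity of $\psi$, both expressions equal $\tau(\phi(\A))\tau(\phi(\B))$, and since both candidates lie in the fibre $\di_Q^{-1}(\A\B)$ on which $\psi$ is injective, they coincide.

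The step I expect to be the main obstacle is the coherence verification in (iv): it is precisely there that the algebraic structure of the subsection must be transported back through the fibrewise bijection supplied by $\psi$, and this relies crucially on both hypotheses in force (no zero divisors together with $\psi$ nonzero). Items (i)--(iii) are then essentially bookkeeping with the subspace characterisation and the commutative square relating $\psi$ to $\phi$.
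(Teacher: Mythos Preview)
Your argument is correct and, for items (i)--(iii), essentially identical to the paper's: both use the characterisation of subspaces via Proposition~\ref{prop:characterization-of-subspaces}, the commutative square $\di_R\circ\psi=\phi\circ\di_Q$, and the fibrewise bijectivity of a nonzero homomorphism in the absence of zero divisors. Your treatment of (iii) is slightly more explicit in that you build the section $\sigma$ directly, whereas the paper argues that the preimage meets each fibre in exactly one point; these amount to the same thing.

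The genuine difference is in (iv). The paper proceeds by recalling that the image of a subsection is the intersection of a subspace with the image of a section, and then notes that the preimage of such an intersection is again an intersection of the same shape by (ii) and (iii). You instead pull back the full coherent nonzero section $\tau$ to $\sigma$ and verify directly that $\sigma$ is nonzero and coherent, so that its restriction to $\phi^{-1}(\E')$ is a subsection in the sense of Definition~\ref{def:subsection}. Your route is more scrupulous: since the definition of subsection requires the ambient section to be \emph{nonzero and coherent}, and (iii) by itself only produces a section, the paper's intersection argument tacitly relies on exactly the coherence and nonzeroness checks you spell out. What the paper's approach buys is brevity; what yours buys is that every hypothesis in Definition~\ref{def:subsection} is visibly accounted for.
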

\begin{proof}
Let $S$ be a subspace of $Q$, which is characterized, by proposition~\ref{prop:characterization-of-subspaces}, as $S=\di_Q^{-1}(\E)$ for a subgroup $\E$ of $\D_Q$.  The projection of its image is $\di_R(\psi(S))=\phi(\di_Q(S))=\phi(\E)$, which is a subgroup of $\D_R$. Since $\psi$ is a nonzero homomorphism, every fiber in $S$ is mapped onto a fiber in $\psi(S)$, so we conclude that $\psi(S)$ coincides with $\di_R^{-1}(\phi(\E))$, so it is a subspace of $R$.

Consider now $S$ to be a subspace of $R$.  Its inverse image $\psi^{-1}(S)$ is made of the fibers which are mapped into $S$.  But these fibers are given by $\di_Q^{-1}(\phi^{-1}(\di_R(S)))$.  Since $\di_R(S)$ is a subgroup of $\D_R$, so is $\phi^{-1}(\di_R(S))$ with respect to $\D_Q$ and, thus, $\psi^{-1}(S)$ is a subspace of $Q$.

Let $\sigma$ be a section of $R$. Since all fibers of $R$ are represented in the section, it is clear that its inverse image, $\psi^{-1}(\sigma(\D_R))$, contains at least an element from each fiber in $Q$.  We now show that there is no more than one from each fiber.  Assume $q_1$ and $q_2$ are elements in the same fiber in $Q$ with $\psi(q_1)$ and $\psi(q_2)$ in the section of $R$. Then $\psi(q_1)$ and $\psi(q_2)$ belong to the same fiber in $R$, which means $\psi(q_1)=\psi(q_2)$ because they are in a section.  Therefore, since fibers in $Q$ are mapped isomorphically to fibers in $R$, this leads to $q_1=q_2$.

Finally, considering a subsection as the intersection of a subspace and a section in $R$ it is clear that the inverse image of such intersection is a intersection of a subspace and a section in $Q$, thus, a subsection.
\end{proof}

The kernel of a nonzero homomorphism $\psi:Q\to R$ is defined as $\ker \psi = \psi^{-1}(1_R)$.  Since $\{1_R\}$ is a subsection of $R$, its inverse image defines, by proposition~\ref{prop:properties-homomorphisms}, a subsection in $Q$.   The image of a homomorphism, $\im \psi = \psi(Q)$ is, by the same proposition, a subspace of $R$. The kernel and the image so defined satisfy an isomorphism theorem.
\begin{theorem}\label{th:isomorphism}
Let $\psi:Q\to R$ be a nonzero homomorphism of spaces of quantities.  Then
\[
Q/{\ker \psi} \cong \im \psi
\]
as spaces of quantities.
\end{theorem}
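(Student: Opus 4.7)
The plan is to mimic the classical first isomorphism theorem of algebra. Write $\Sigma = \ker\psi$, which is a subsection of $Q$ by proposition \ref{prop:properties-homomorphisms}, and whose associated subgroup of $\D_Q$ is $\E = \phi^{-1}(1_{\D_R})$. The candidate isomorphism is the map $\bar\psi : Q/\Sigma \to \im\psi$ defined on representatives by $\bar\psi(q\Sigma) = \psi(q)$. I would verify in turn: well-definedness, the homomorphism axioms of definition~\ref{def:homomorphism}, surjectivity, and injectivity.

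Well-definedness is immediate, since if $q_2 = q_1 s$ with $s \in \Sigma$, then $\psi(q_2) = \psi(q_1)\psi(s) = \psi(q_1)\,1_R = \psi(q_1)$. For the multiplicative homomorphism property, I would simply combine the definition of the product on $Q/\Sigma$ with property (i) of $\psi$. The additive/linear part is slightly more delicate because of the way addition on a quotient space was defined in section~\ref{subsec:quotient}: given $q_1\Sigma$ and $q_2\Sigma$ in a common fiber $\widehat\di^{-1}(\A\E)$, one picks $s=\sigma(\A)\in\Sigma$ with $\di_Q(q_1 s)=\di_Q(q_2)$ and sets $(q_1\Sigma)+(q_2\Sigma) = (q_1 s + q_2)\Sigma$. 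Applying $\bar\psi$ and using $\psi(s)=1_R$ together with property (ii) of $\psi$ yields $\psi(q_1 s + q_2) = \psi(q_1)+\psi(q_2)$; the computation for scalar multiplication is entirely analogous.

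Surjectivity onto $\im\psi$ is tautological. For injectivity, assume $\psi(q_1)=\psi(q_2)$. If $q_1$ is nonzero then proposition~\ref{prop:no-zero-divisors} supplies $q_1^{-1}$, and a short computation using $\psi(q_1^{-1})=\psi(q_1)^{-1}$ gives $\psi(q_2 q_1^{-1}) = 1_R$; hence $s:=q_2 q_1^{-1}\in\Sigma$ and $q_2\Sigma=q_1\Sigma$. If $q_1=0$ then $\psi(q_2)=0$; since $\psi$ is nonzero, one first argues that $q_2$ must itself be a zero (otherwise $q_2$ would be invertible in $Q$ and $\psi(q_2)$ would then admit an inverse in $R$, contradicting its being a zero), and then that $\di_Q(q_1)$ and $\di_Q(q_2)$ differ by an element of $\E$, so both represent the unique zero of the fiber $\widehat\di^{-1}(\di_Q(q_1)\E)$ in $Q/\Sigma$.

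The main obstacle I expect is this last point: all of the manipulations with representatives go through cleanly for nonzero quantities using the cancellation law, but the handling of zero elements needs to be spelled out carefully because $Q$ has many zeros (one per fiber), they are all collapsed to zero by any homomorphism, and in the quotient they are further identified across fibers within the same coset of $\E$. Once this bookkeeping is made explicit, the verification that $\bar\psi$ is a well-defined bijective homomorphism of spaces of quantities is routine.
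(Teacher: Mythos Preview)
Your outline of the map $\bar\psi$ and the verification of well-definedness, the homomorphism axioms, surjectivity, and injectivity are all essentially what the paper does (and indeed the paper dismisses that part as ``standard''). But you have skipped the one step the paper treats as nontrivial: showing that $Q/\ker\psi$ is a space of quantities in the first place.

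Recall from section~\ref{subsec:quotient} and proposition~\ref{prop:quotient} that $Q/\Sigma$ is only a space of quantities when the quotient group $\D_Q/\E$ is free Abelian; this is not automatic for an arbitrary subsection. You assert that the subgroup associated to $\Sigma=\ker\psi$ is $\E=\phi^{-1}(1_{\D_R})=\ker\phi$, but that equality $\di_Q(\ker\psi)=\ker\phi$ itself needs an argument (the paper proves both inclusions, the harder one using that $\psi$ is nonzero so that a preimage in the right fiber can be rescaled to land on $1_R$). Once that is established, the first isomorphism theorem for groups gives $\D_Q/\ker\phi\cong\im\phi$, and $\im\phi$, being a subgroup of the free Abelian group $\D_R$, is free Abelian. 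Only then is the quotient $Q/\ker\psi$ a legitimate space of quantities and the statement of the theorem meaningful. Add this verification at the start of your proof and the rest goes through.
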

\begin{proof}
The first step is to check that the quotient of the theorem is indeed a space of quantities. Let us denote by $K$ the kernel of $\psi$, which is a subsection of $Q$.  Thus we only have to show the group of dimensions of $Q/K$ is free Abelian and, to that end, we have to identify the projection of $K$ on $\D_Q$. We claim this projection to be precisely the kernel of the induced group homomorphism: $\di_Q(K)=\ker \phi$.  Let $q$ be in $K$.  Then $\phi \circ \di_Q (q) = \di_R \circ \psi (q) = 1_{\D_R}$, so $\di_Q(q) \in \ker \phi$ which shows one inclusion. Now let $\A$ be in $\ker \phi$, and let $q$ be a nonzero element in the fiber $\di_Q^{-1}(\A)$.  Then $1_{\D_R}=\phi(\A)=\di_R \circ \psi(q)$, so $\psi(q)$ has dimension one and can be written as $\psi(q)=\alpha 1_R$ for a nonzero $\alpha$ in $F$ (since $\psi$ is a nonzero homomorphism).  Consider the element $q_1=\alpha^{-1} q$.  Then $\psi(q_1)=1_R$, so $q_1 \in K$ and $\di_Q(q_1)=\di_Q(q)=\A$, so $\A \in \di_Q(K)$, which shows the other inclusion and the claim is proved.  The group of dimensions of $Q/K$ is, thus, $\D_R/\ker \phi$, which is isomorphic with $\im \phi$ by the isomorphism theorem for groups.  Since $\im \phi$ is a subgroup of $\D_R$ it is free Abelian, and so is $\D_R/\ker \phi$.  Therefore $Q/K$ is a space of quantities.

The rest of the proof is standard. Define the map $\hat \psi:Q/K\to \im \psi$ by $\hat\psi(qK)= \psi(q)$.  It is straightforward to check, first, it is well defined; second, it is a homomorphism of spaces of quantities; third, it is a bijection.
\end{proof}
\begin{example}\label{ex:speed-light-homomorphism}
Let us revisit example~\ref{ex:reduction-of-units} from the viewpoint of homomorphisms.  Consider the map $\psi:Q_{\rm kin}\to Q_{\rm time}$ given in the following form: the image of the quantity called \emph{second} be itself, while the image of the speed of light, $c$, be $1_{Q_{\rm time}}$.  Then $\ker \psi = \{1_{Q_{\rm kin}}\} \cup \{c^n \,:\, n \in \mathbb{Z}\}$ and $\im \psi = Q_{\rm time}$. Theorem~\ref{th:isomorphism} says that $Q_{\rm kin}/\ker \psi$ is isomorphic with $Q_{\rm time}$.
\end{example}

The next result is the classification theorem for spaces of quantities free of zero divisors.
\begin{theorem}\label{th:classification}
Two spaces of quantities over the same field, free of zero divisors, are isomorphic if and only if they have the same rank.
\end{theorem}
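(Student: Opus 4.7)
The plan is to reduce the theorem to the standard construction of Example 3.5 by showing that every space of quantities $Q$ free of zero divisors is isomorphic to $F \times \D_Q$. Once this is established, the theorem follows because the standard example depends on $\D$ only up to group isomorphism, and two finitely generated free Abelian groups are isomorphic exactly when they have the same rank.

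For the "only if" direction, suppose $\psi\!:Q\to R$ is an isomorphism. The induced group homomorphism $\phi\!:\D_Q\to\D_R$ constructed after Definition 4.1 is then forced to be a bijection (surjectivity because $\di_R$ is surjective and $\psi$ is onto; injectivity because if $\phi(\A)=1_{\D_R}$, picking a nonzero $q$ in the fiber $\di_Q^{-1}(\A)$ gives $\psi(q)$ dimensionless and, by tracking back through $\psi^{-1}$ applied to $1_R$ and using nontriviality, $\A=1_{\D_Q}$). Since $\D_Q\cong\D_R$ as finitely generated free Abelian groups, they share the same rank, so $\mathrm{rank}(Q)=\mathrm{rank}(R)$.

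For the "if" direction, I would first produce an isomorphism $\psi_Q\!:Q\to F\times\D_Q$. By Proposition 3.4 the absence of zero divisors provides a coherent section $\sigma\!:\D_Q\to Q$, and via equation (\ref{eq:Maxwell2}) every $q\in Q$ admits the canonical expression $q=\nu(q)\,\sigma(\di(q))$. I would define
\[
\psi_Q(q)=\bigl(\nu(q),\,\di(q)\bigr).
\]
Using Proposition 3.8 (the map $\nu$ is $F$-linear on fibers and multiplicative thanks to the coherence of $\sigma$) together with the fact that $\di$ is itself a monoid homomorphism, $\psi_Q$ satisfies the two conditions of Definition 4.1. Injectivity is immediate from (\ref{eq:Maxwell2}), and surjectivity follows by sending $(\alpha,\A)$ back to $\alpha\,\sigma(\A)$. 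The same construction applied to $R$ yields $\psi_R\!:R\to F\times\D_R$.

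Finally, if $\mathrm{rank}(Q)=\mathrm{rank}(R)=k$, pick bases $\{\A_1,\dots,\A_k\}$ of $\D_Q$ and $\{\B_1,\dots,\B_k\}$ of $\D_R$, and let $\phi\!:\D_Q\to\D_R$ be the group isomorphism determined by $\A_i\mapsto\B_i$. The induced map $F\times\D_Q\to F\times\D_R$, $(\alpha,\A)\mapsto(\alpha,\phi(\A))$, is easily seen to be an isomorphism of spaces of quantities (it is bijective, respects the projection maps, and preserves the three operations of Example 3.5). Composing with $\psi_Q$ and $\psi_R^{-1}$ produces an isomorphism $Q\to R$. The main technical point requiring care, and the only step not purely formal, is the verification that $\psi_Q$ is a homomorphism of spaces of quantities; this reduces to the identity $\alpha(qr)=(\alpha q)r$ from Definition 2.2(iii) combined with $\sigma(\A\B)=\sigma(\A)\sigma(\B)$, but handling both the multiplicative and fiber-additive axioms in one go is where a careful bookkeeping of Proposition 3.8 is essential.
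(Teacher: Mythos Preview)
Your proof is correct, and its ``only if'' direction is essentially identical to the paper's. The ``if'' direction, however, is organized differently. The paper constructs the isomorphism $Q\to R$ directly: it picks coherent sections $\sigma_Q$ and $\sigma_R$ (available by Proposition~\ref{prop:no-zero-divisors}), lets $\psi$ send $\sigma_Q(\A)$ to $\sigma_R(\phi(\A))$ for the chosen group isomorphism $\phi$, and extends linearly fiber by fiber. You instead factor through the canonical model $F\times\D$, first building $\psi_Q:Q\to F\times\D_Q$ via $q\mapsto(\nu(q),\di(q))$, then transporting along $(\alpha,\A)\mapsto(\alpha,\phi(\A))$, and finally inverting $\psi_R$. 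The underlying mechanism is the same---coherent sections provide the bridge between fibers---but your route makes the normal form explicit as an intermediate object, whereas the paper postpones the isomorphism $Q\cong F\times\D$ to a separate example \emph{after} the classification theorem. Your organization has the advantage of making clear that the classification is really a corollary of the normal-form statement; the paper's direct construction is slightly shorter but obscures this dependency. A minor note: your cross-references (``Proposition~3.4'', ``Example~3.5'', ``Proposition~3.8'') do not match the paper's numbering, though the intended statements are unambiguous from context.
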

\begin{proof}
First consider two spaces of quantities $Q \cong R$.  Then there is an isomorphism $\psi:Q\to R$ which induces a group homomorphism $\phi:\D_Q\to \D_R$.  We only need to show the latter to be an isomorphism for it is a well known result of the theory of free Abelian groups that two such groups are isomorphic if and only if they have the same rank \cite[Theorem 10.14]{Rot95}.  But that is obvious since the map $\phi$ is nothing but the rule which says which fiber in $Q$ is mapped to what fiber in $R$ and, since $\psi$ is an isomorphism, this mapping of fibers is a bijection.

Second, assume $Q$ and $R$ are two spaces of quantites of the same rank, that is, their groups of dimensions $\D_Q$ and $\D_R$ have the same rank.  Therefore there is a group isomorphism $\phi:\D_Q\to \D_R$ and it defines a bijection of the fibers in $Q$ with the fibers in $R$.  If we can assign a linear isomorphism between each pair of fibers we are done.  To that end it is enough to map a nonzero element of each fiber in $Q$ with a nonzero element of its corresponding fiber in $R$.  Now, since both $Q$ and $R$ are free of zero divisors, by proposition~\ref{prop:no-zero-divisors} each of them has a coherent system of units, say $\sigma_Q$ and $\sigma_R$ respectively.  Define a map $\psi:Q\to R$ by giving its action on the set $\sigma_Q(\D_Q)$ so that the following diagram commutes,
\[
\begin{CD}
Q @>{\psi}>> R \\
@A{\sigma_Q}AA @AA{\sigma_R}A \\
\D_Q @>{\phi}>> \D_R
\end{CD}
\]
and extend it linearly in each fiber.  This map is easily seen to be an isomorphism of spaces of quantities, so $Q\cong R$.
\end{proof}
As a last example, we show that, up to isomorphism, the example \ref{ex:standard-example} is the only space of quantities over a group of dimensions and a field free of zero divisors.
\begin{example}
Let $Q$ be a space of quantities over the field $F$ with group of dimensions $\D$ and free of zero divisors.  Let $\sigma$ be a coherent system of units and $\nu$ the map defined in equation (\ref{eq:map-nu}).  Then the map $\psi:Q\to F\times\D$ given by $\psi(q)=(\nu(Q),\di(q))$ is an isomorphism of spaces of quantities.  Its inverse is $\psi^{-1}(\alpha, \A)=\alpha \, \sigma(\A)$.
\end{example}
This is to say that every space of quantities $Q$, free of zero divisors, is isomorphic with $F\times\D$, that is, all the quantities can be written as in equation (\ref{eq:Maxwell}). But the isomorphism is not canonical, for it depends on the system of units chosen.

%%%%%%%%%%%%%%%%%%%%% References %%%%%%%%%%%%%%%%
%\section*{References}
\bibliographystyle{unsrt}      
\bibliography{alvaro-p-raposo}

\end{document}